\documentclass[reqno]{conm-p-l}

\pdfoutput=1

\usepackage{relsize}
\usepackage{color}

\newcommand\bigset[2]{\left\{\, #1 
 \mathrel{\left| \vphantom {\left\{ #1 \mid #2 \right\} }
 \right.} #2 \,\right\} }

\usepackage{graphicx}

\definecolor{grey}{rgb}{0.3,0.3,0.3}

\def\glc{\GL(d,\complex)}
\def\glr{\GL(d,\real)}
\renewcommand{\vector}{\overrightarrow}

\def\seg#1#2{\hss$#1$\vrule width 0.4in height 4pt depth-2.5pt $#2$\hss}
\def\sseg#1#2{\hss$#1$\vrule width 0.25in height 4pt depth-2.5pt $#2$\hss}

\renewcommand{\mod}{\mathop{\rm mod}}
\newcommand{\cover}{\widetilde}

\renewcommand{\hat}{\widehat}

\newcommand{\Zar}[1]{\overline{\overline{#1}}}

\DeclareMathOperator{\GL}{GL}
\DeclareMathOperator{\SL}{SL}
\DeclareMathOperator{\Mat}{Mat}

\DeclareMathOperator{\rot}{rot}

\newcommand{\integer}{\mathbb{Z}}
\renewcommand{\natural}{\mathbb{N}}
\newcommand{\rational}{\mathbb{Q}}
\newcommand{\real}{\mathbb{R}}
\newcommand{\complex}{\mathbb{C}}
\newcommand{\F}{\mathbb{F}}

\newcommand{\torus}{\mathbb{T}}
\newcommand{\Euclidean}{\mathbb{E}}

\newcommand{\iso}{\cong}
\newcommand{\compose}{\mathbin{\circ}}
\newcommand{\Id}{\mathord{\mathbb{I}}}

\newcommand{\Rrank}{\mathop{\text{$\real$-rank}}}

\newcommand{\displayline}[1]{\hbox to \textwidth{#1}}

\newcommand{\pref}[1]{{\rm(}\ref{#1}{\rm)}}

\swapnumbers
\numberwithin{equation}{section}

\newtheorem{thm}[equation]{Theorem}
\newtheorem{prop}[equation]{Proposition}

\theoremstyle{definition}
\newtheorem{defn}[equation]{Definition}
\newtheorem{eg}[equation]{Example}
\newtheorem{egs}[equation]{Examples}

\theoremstyle{remark}
\newtheorem{rem}[equation]{Remark}
\newtheorem{notation}[equation]{Notation}
\newtheorem{exer}[equation]{Exercise}
\newtheorem{warn}[equation]{Warning}

\newtheorem*{techrem}{Technical remark}

 \newcounter{step}
 \newenvironment{step}[1][\unskip]{\refstepcounter{step}\em
 \medskip \noindent Step \thestep\ #1.\ }{\unskip\upshape}
 \renewcommand{\thestep}{\arabic{step}}

\begin{document}

\title{What is a superrigid subgroup?}

\author{Dave Witte Morris}
\address{Department of Mathematics and Computer Science, University of Lethbridge,
Lethbridge, Alberta, T1K~3M4, Canada}
\thanks{This article is based on a talk given in various forms at several different universities, and at an MAA Mathfest.
It was written during visits to the University of Chicago and the Tata Institute of Fundamental Research in Mumbai, India; I would like to thank both of these institutions for their generous hospitality.
The writing was partially supported by a research grant from the National Science and
Engineering Research Council of Canada.}

\date{December 14, 2007.} 

\dedicatory{To my friend and mentor Joseph A.~Gallian on his 65th birthday}


\maketitle

\tableofcontents

It is well known that a linear transformation can be defined to have any desired action on a basis.
From this fact, one can show that every group homomorphism
from~$\integer^k$ to~$\real^d$ extends to a homomorphism from~$\real^k$ to~$\real^d$,
and we will see other examples of discrete
subgroups~$H$ of connected groups~$G$, such that the
homomorphisms defined on~$H$ can (``almost") be extended
to homomorphisms defined on all of~$G$.
First, let us see that this is related to a very classical topic in geometry, the study of linkages.

\section{Rigidity of Linkages}

Informally, a \emph{linkage} is an object in $3$-space that is constructed from some finite set of line segments (called ``rods,'' or ``edges'') by attaching endpoints of some of the rods to endpoints of some of the other rods. (That is, a linkage naturally has the structure of a $1$-dimensional simplicial complex.) It is assumed that the rods are rigid (they can neither stretch nor bend), but that the joints that connect the rods are entirely flexible --- they allow the rods to rotate freely, as long as the endpoints remain attached.

\begin{eg}[Hinge]
Construct a linkage with four vertices (or ``joints'') $A,B,C,D$ by putting together two different triangles $ABC$ and $BCD$ with the same base $BC$, as in Figure~\ref{hingefig}(a). 
The angle between the two triangles can be varied continuously, so the object has some flexibility --- it is not \emph{rigid}.
(For example, the hinge can be opened wider, as in Figure~\ref{hingefig}(b).) 
This linkage can reasonably be called a ``hinge\hbox to 0pt{.\hss}''

\begin{figure}[ht]
\hbox to \textwidth{\hfil
\vbox{
\hbox to 2 in{\hfil\includegraphics[scale=0.3]{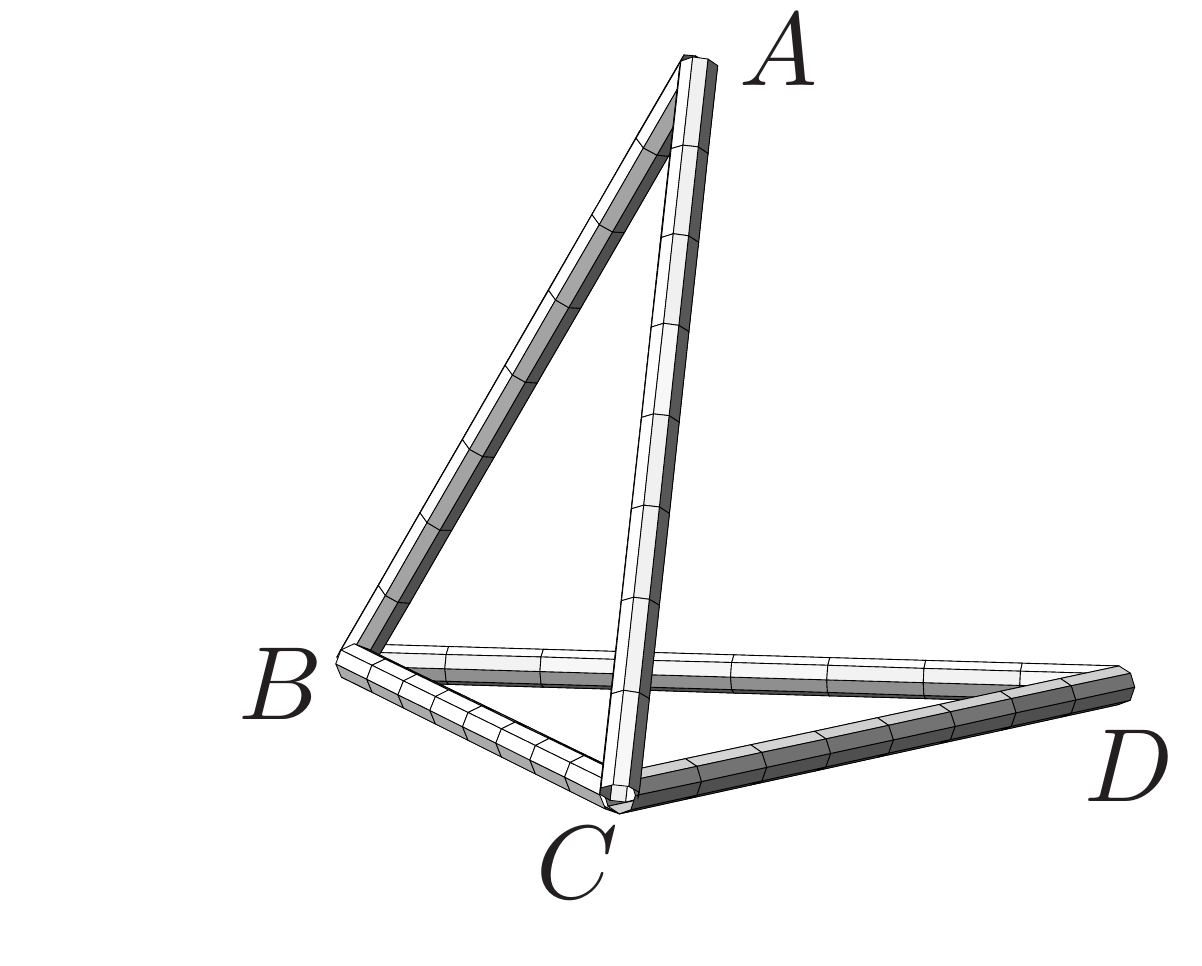}\hfil}
\hbox to 2 in{\hfil(a)\hfil}
}
\vbox{
\hbox to 2 in{\hfil\includegraphics[scale=0.3]{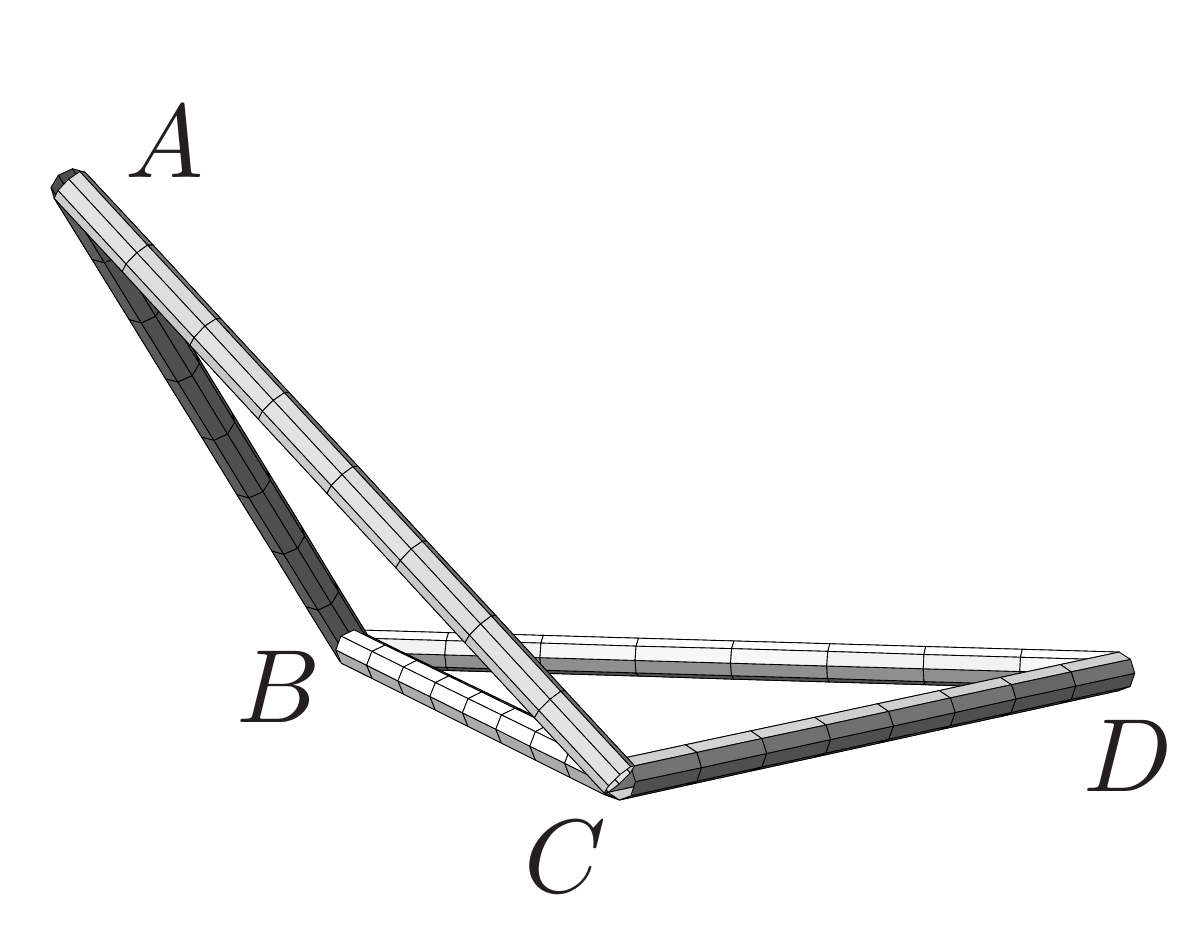}\hfil}
\hbox to 2 in{\hfil(b)\hfil}
}
\hfil}
\caption{The hinge is not rigid, because the angle between the two triangles can be varied continuously, without changing the lengths of the rods in the linkage.}
\label{hingefig}
\end{figure}
\end{eg}

\begin{eg}[Tetrahedron]
Construct a tetrahedron with four vertices $A,B,C,D$ by joining every pair of vertices with an edge, as in Figure~\ref{tetrafig}. 
This object is rigid --- it cannot be deformed.

\begin{figure}[ht]
\includegraphics[scale=0.3]{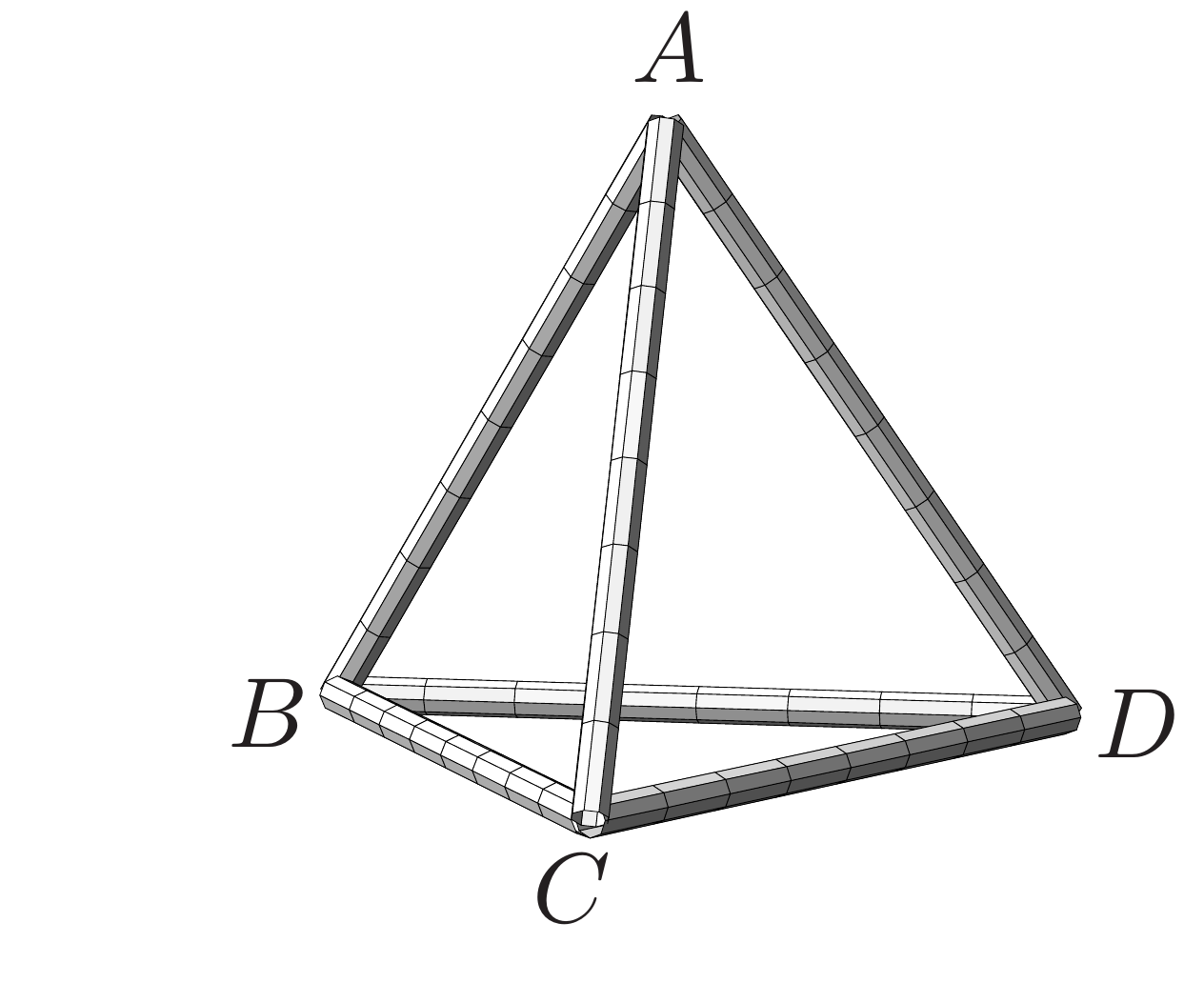}
\caption{A tetrahedron cannot be deformed; it is rigid.}
\label{tetrafig}
\end{figure}
\end{eg}

\begin{eg}[Double tetrahedron]
Add a small tetrahedron $BCDE$ to the bottom of the tetrahedron $ABCD$, as in Figure~\ref{twotetraoutfig}.
The resulting object has no deformations, so it is rigid.
\begin{figure}[ht]
\includegraphics[scale=0.3]{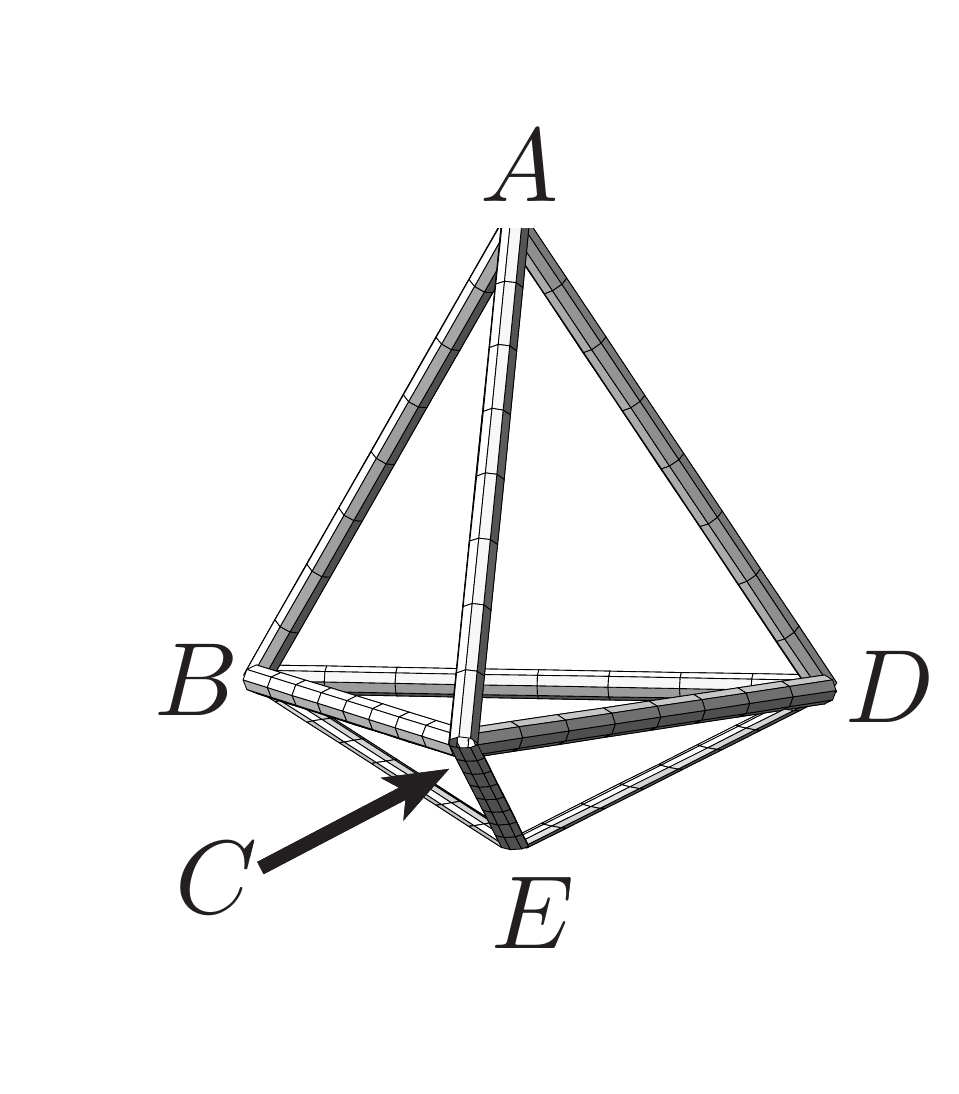}
\caption{Two tetrahedra with a common face form a rigid structure.}
\label{twotetraoutfig}
\end{figure}

However, this double tetrahedron does not have the property that is called \emph{global rigidity}. Namely, suppose:
	\begin{enumerate}
	\item We label each end of each rod with the name of the vertex that joins it to other rods, and then dismantle the linkage. This results in a collection of $9$ rods, which are pictured in Figure~\ref{twotetrarodsfig}.
	\item We then assemble these rods into a linkage, by joining together all vertices that have the same label.
	\end{enumerate}
Unfortunately, the resulting linkage may not be the one we started with; as illustrated in Figure~\ref{twotetrainfig}, the small tetrahedron could be \emph{inside} the larger one, instead of \emph{outside}.

\begin{figure}[ht]
\displayline{
 \seg AB
 \seg AC
 \seg AD
 }
\displayline{
 \seg BC
 \seg BD
 \seg CD
 }
\displayline{
 \sseg BE
 \sseg CE
 \sseg DE
 }
\caption{The double tetrahedron is made up of $9$ rods
(6 long ones and 3 short ones).}
\label{twotetrarodsfig}
\end{figure}  

\begin{figure}[ht]
\includegraphics[scale=0.3]{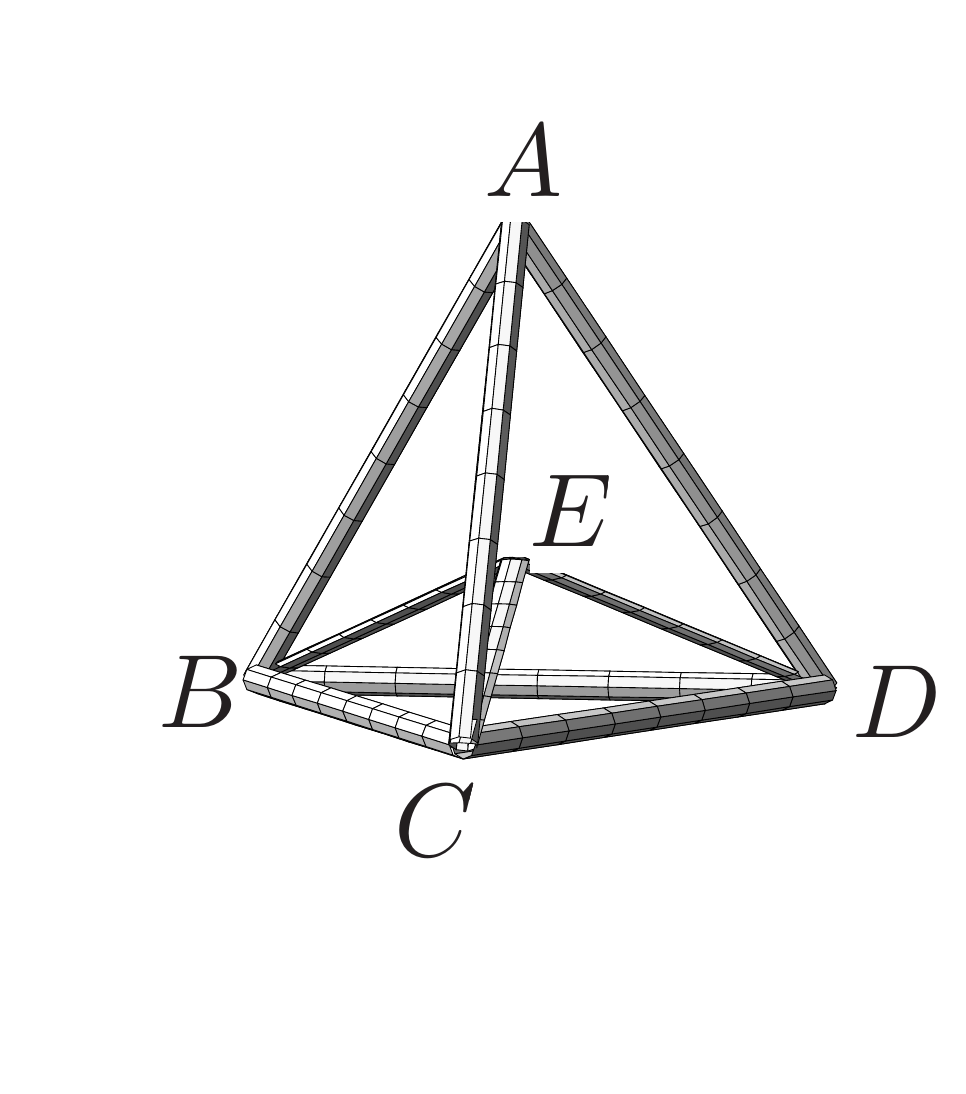}
\caption{The double tetrahedron is not globally rigid: if it has been taken apart, 
   it can be reassembled incorrectly, even if the gluing instructions are followed correctly.}
   \label{twotetrainfig}
   \end{figure}

In summary:
\begin{itemize}
\item The double tetrahedron has no small perturbations. In other words, if it is reassembled, and every rod is close to its correct position, then every rod is in \emph{exactly} the correct position.
So the object is rigid, or, more precisely, ``locally rigid\hbox to 0pt{.\hss}''
\item On the other hand, the double tetrahedron is not rigid in a global sense --- we say that it is not \emph{globally rigid} --- because it can be reassembled incorrectly if we do not assure that the rods are near their correct position.
\end{itemize}
\end{eg}

\begin{eg}
A tetrahedron \emph{is} globally rigid: its geometric structure is completely determined (up to congruence) by the combinatorial data that specify which of the rods are to be joined together.
 \end{eg}

Rigidity and global rigidity are important concepts in geometry, and also in the real world:
	\begin{itemize}
	\item Scaffolds, shelving units, bridges, and many other structures can be viewed as linkages, and they must be designed not to collapse; they must be (locally) rigid.
	\item Furniture and other bulky objects are sometimes shipped in pieces that are to be assembled at the destination, by following instructions of the type ``insert tab~A in slot~B.''
Unless the object is globally rigid, the instructions will be insufficient to guarantee proper assembly.
	\end{itemize}
Thus, it should not be hard to imagine that an analogous notion in other fields would have significant interest to researchers in that area. We will focus on the case of group theory.

\section{The Analogous Notion in Group Theory}

Informally, saying that a linkage~$X$ is globally rigid means that if $Y$ is any linkage that is constructed from rods of the same lengths by using the same combinatorial rules, then $Y$ is congruent to~$X$. Here is a more formal definition:

\begin{defn}
To say that a linkage~$X$ in the Euclidean space~$\Euclidean^3$ is \emph{globally rigid} means that if
	\begin{itemize}
	\item $Y$ is any linkage in~$\Euclidean^3$,
	and
	\item $f \colon X \to Y$ is a combinatorial isomorphism (i.e., $f$ is a bijection that maps each rod in~$X$ isometrically onto a rod in~$Y$),
	\end{itemize}
then $f$ extends to an isometry~$\hat f$ of~$\Euclidean^3$.
\end{defn}

The same idea can easily be adapted to other categories of mathematical objects. For example, replacing $\Euclidean^3$ with a group~$G$, and replacing~$X$ and~$Y$ with subgroups~$H$ and~$K$ of~$G$ yields the following definition, if we realize that an automorphism of~$G$ is the group-theoretic analogue of an isometry of~$\Euclidean^3$.

\begin{eg} \label{ZRigidInREg}
$\real$ is a group (under addition), and $\integer$ is a subgroup. 
If
	\begin{itemize}
	\item $K$ is a subgroup of~$\real$, 
	and 
	\item $\varphi \colon \integer \to K$ is an isomorphism,
	\end{itemize}
then $\varphi$ extends to an automorphism~$\hat\varphi$ of~$\real$.
\end{eg}

\begin{proof}
Let $c = \varphi(1)$ and define 
$\hat\varphi \colon \real \to \real$ by
	$$ \hat\varphi(x) = c x .$$
Then:
	\begin{itemize}
	\item It is obvious that $\hat\varphi$ is a homomorphism. 
	\item Since $\varphi$ is injective, we know 
		$$ c = \varphi(1) \neq \varphi(0) = 0 ,$$
	so $\hat\varphi$ is bijective.
	\item For any $n \in \integer$, we have
		\begin{align*}
		 \hat\varphi(n) 
		&= c n
		&& \text{(definition of $\hat\varphi$)}
		\\&= n \cdot \varphi(1)
		&& \text{(definition of $c$)}
		\\&= \varphi(n)
		&& \text{($\varphi$ is a homomorphism)}
		. \end{align*}
	So $\hat\varphi$ extends~$\varphi$.
	\end{itemize}
 Thus, $\hat\varphi$ is an automorphism of~$\real$ that extends~$\varphi$.
 \end{proof}

In the above example:
	\begin{enumerate} 
	\item The group $\real$ is also a topological space, and the group operations of addition and negation are compatible with the topology (that is, they are continuous); thus, $\real$ is a \emph{topological group}. 
	\item The subgroup $\integer$ is discrete in~$\real$ (i.e., has no accumulation points); so we say that $\integer$ is a \emph{discrete subgroup} of~$\real$.
	\item The homomorphism $\hat\varphi$ is continuous. 
	\end{enumerate}
Thus, $\integer$ is globally rigid in~$\real$, even when we take into account the topological structure of~$\real$:

\begin{defn} \label{GlobalRigDefn}
Let $H$ be a discrete subgroup of a topological group~$G$. Saying $H$ is \emph{globally rigid} in~$G$ means that if
	\begin{itemize}
	\item $K$ is any discrete subgroup of~$G$,
	and
	\item $\varphi \colon H \to K$ is any isomorphism,
	\end{itemize}
then $\varphi$ extends to a continuous automorphism~$\hat \varphi$ of~$G$.
\end{defn}

\section{Definition of Superrigidity}

In the definition of global rigidity \pref{GlobalRigDefn}, the map~$\varphi$ is assumed to be an isomorphism, and its image~$K$ is assumed to be contained in the same group~$G$ that contains~$H$. ``Superrigidity'' is a notion that removes these restrictions. Here is a very elementary example of this. It generalizes Example~\ref{ZRigidInREg}:

\begin{eg} \label{HomoZkToRd}
Suppose $\varphi$ is any group homomorphism from~$\integer^k$ to~$\real^d$.
(That is, $\varphi$ is a function from~$\integer$ to~$\real^d$, and we have $\varphi(m+n) = \varphi (m) + \varphi (n)$.) Then $\varphi$ extends to a continuous homomorphism $\widehat\varphi \colon \real^k \to \real^d$.
\end{eg}

\begin{proof}
Let $e_1,e_2,\ldots,e_k$ be the standard basis of~$\real^k$, so $\{e_1,e_2,\ldots,e_k\}$ is a generating set for the subgroup~$\integer^k$. 
A linear transformation can be defined to have any desired action on a basis, so there is a linear transformation $\hat\varphi \colon \real^k \to \real^d$, such that
	\begin{equation} \label{hatphi(e_i)=phi(e_i)}
	 \text{$\hat\varphi(e_i) = \varphi(e_i)$ \quad for $i = 1,2,\ldots,k$.} 
	 \end{equation}
Then:
	\begin{itemize}
	\item Since $\hat\varphi$ is linear, it is continuous.
	\item Because $\hat\varphi$ is a linear transformation, it respects addition; that is, it is a homomorphism from~$\real^k$ to~$\real^d$. 
	\item From \pref{hatphi(e_i)=phi(e_i)}, we know that $\varphi$ and~$\hat\varphi$ agree on $e_1,e_2,\ldots,e_k$. Thus, since $\{e_1,e_2,\ldots,e_k\}$ generates~$\integer^k$, the two homomorphisms agree on all of~$\integer^k$. In other words, $\hat\varphi$ extends~$\varphi$.
	\end{itemize}
So $\hat\varphi$ is a continuous automorphism that extends~$\varphi$.
\end{proof}

In short:
\begin{equation} \label{ZkToRdExtends}
\begin{matrix}
\text{\it Every group homomorphism from~$\integer^k$ to~$\real^d$} \hfill
\\ \text{\it extends to a continuous homomorphism from~$\real^k$ to~$\real^d$.} \hfill
\end{matrix}
\end{equation}
However, because this observation deals only with abelian groups, it is rather trivial.
A superrigidity theorem is a result of similar flavor that deals with more interesting groups.
Namely, instead of only homomorphisms into the abelian group~$\real^d$, it is much more interesting to look at homomorphisms into matrix groups. (Any such homomorphism is called a \emph{group representation}, and the study of these representations is a major part of group theory.) 

Let us be more precise:

\begin{notation}
$\glr =\{\, \text{$d  \times d$ invertible matrices with real entries} \,\}$.
\end{notation}

It is important to note that $\glr$ is a group under multiplication. Furthermore, $\real^k$ is a subgroup of $\glr$ (if $d > k$). For example,
	$$\real ^3 \iso \begin{bmatrix}
 1&0&0&*\\
 0&1&0&*\\
 0&0&1&*\\
 0&0&0&1\\
 \end{bmatrix}
 \subset \GL(4,\real) .$$ 
So any homomorphism into $\real^d$ 
   can be thought of as a homomorphism into $\GL(d+1,\real)$.

Unfortunately, \pref{ZkToRdExtends} does not remain valid if we replace $\real^d$ with $\glr$:

\begin{eg}
Suppose $\varphi$ is a group homomorphism from~$\integer$ to $\glr$. That is, $\varphi$ is a function from~$\integer$ to $\glr$, and we have
	$$ \varphi (m+n) = \varphi (m) \cdot \varphi (n) .$$
It \emph{need not} be the case that $\varphi$ extends to a continuous homomorphism from~$\real$ to $\glr$.
\end{eg}

\begin{proof}[Proof by contradiction]
 Suppose there is a continuous homomorphism $\widehat\varphi  \colon \real \to \glr$, such that $\widehat\varphi(n) = \varphi(n)$, for all $n \in \integer$.

Consider the composition $\det \compose \widehat\varphi$. Note that:
	\begin{itemize}
	\item Since the determinant of any invertible matrix is nonzero, we see that 
$\det \compose \widehat\varphi$ is a function from~$\real$ to~$\real^\times$ (where $\real^\times$ is the set of nonzero real numbers).
	\item Since homomorphisms map the identity element of the domain group to the identity element of the image, we have $\varphi(0) = \Id$ (the identity matrix). Hence, 
		$$\det \bigl( \varphi(0) \bigr) = \det(\Id) = 1 > 0 .$$
	\item Since the composition of continuous functions is continuous, and the continuous image of a connected set is continuous, we know $\det \bigl( \hat\varphi (\real) \bigr)$ is connected.
	\end{itemize}
Therefore, $\det \bigl( \hat\varphi (\real) \bigr)$ is a connected subset of $\real^\times$ that contains the number~$1$. So $\det \bigl( \hat\varphi (\real) \bigr) \subset \real^+$. In particular,
	$ \det \bigl( \hat\varphi (1) \bigr) > 0$. Therefore
		$$ \det \bigl(\varphi (1) \bigr) = \det \bigl( \hat\varphi (1) \bigr) > 0 . $$
But $\varphi$ is an arbitrary homomorphism from~$\integer$ to $\glr$, and it need not be the case that $ \det \bigl(\varphi (1) \bigr)  > 0$. (Namely, for any $A \in \glr$, we may let $\varphi(n) = A^n$. If $\det A < 0$, then $\det \bigl(\varphi (1) \bigr) = \det A < 0$.) This is a contradiction.
\end{proof}

The above counterexample is based on the possibility that $\det\bigl(\varphi (1)\bigr) < 0$. However, for any~$n$, we have
	$$\det\bigl(\varphi (2n)\bigr) 
	  = \det\bigl(\varphi (n + n)\bigr) 
	  = \det\bigl(\varphi (n) \cdot \varphi(n) \bigr)
	  = \Bigl(\det\bigl(\varphi (n)\bigr) \Bigr)^2
	  > 0 .$$
Thus, this possibility does not arise if we restrict our attention to even numbers.
That is, in defining the extension~$\widehat\varphi$, which interpolates a nice curve through the given values at points of~$\integer$, we may have to ignore the values at odd numbers, and only match the values of~$\varphi$ at even numbers. An illustration of this is in Figure~\ref{ignoreoddsfig}.

\begin{figure}
\includegraphics[scale=0.3]{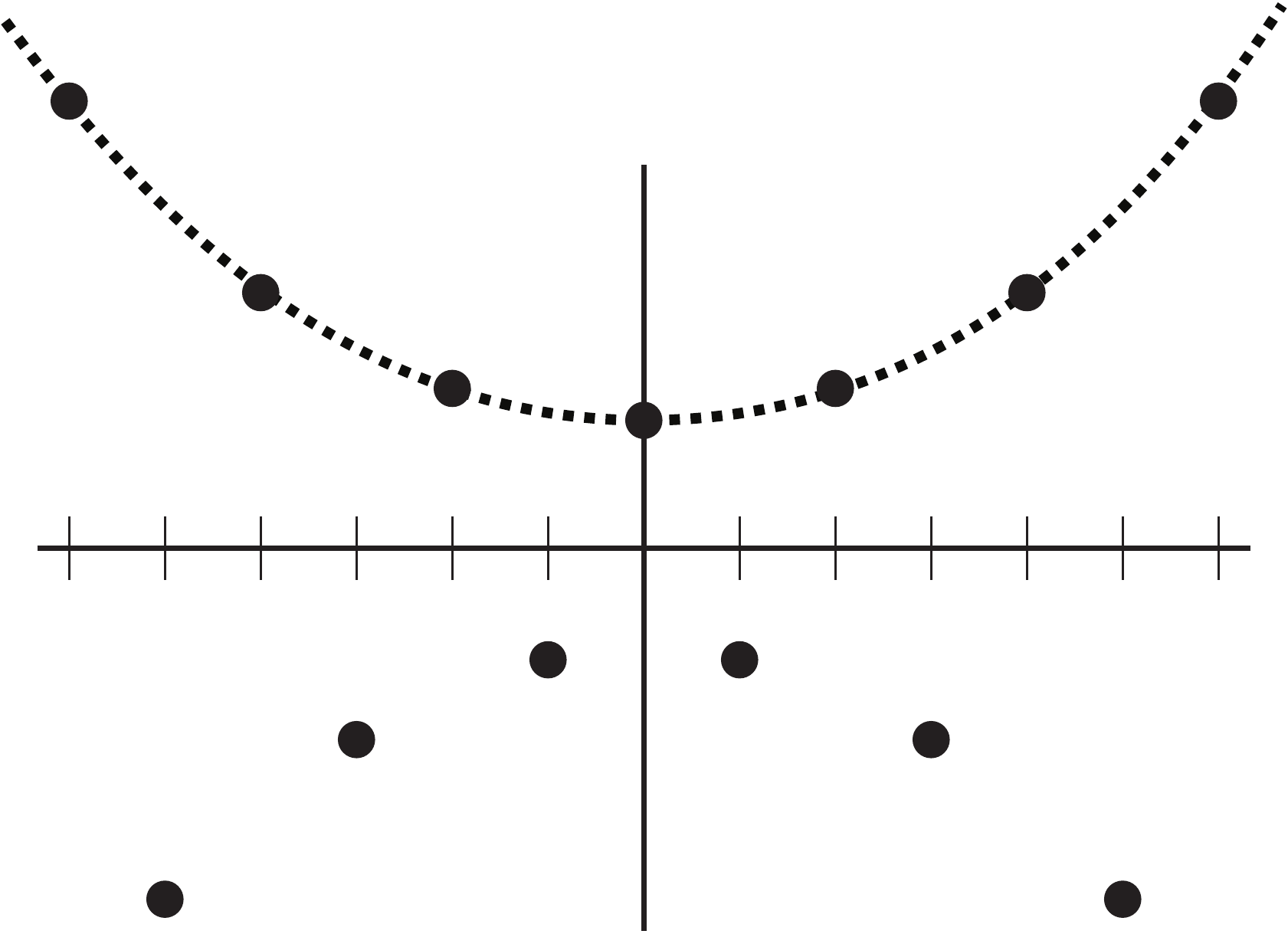}
\caption{It may be necessary to ignore the values at odd numbers when interpolating.}
\label{ignoreoddsfig}
\end{figure}

One can imagine that, analogously, there might be situations where it is necessary to restrict attention, not to multiples of~$2$, but to multiples of some other integer~$N$. A group theorist may observe that
	$$\text{$\{\, \hbox{multiples of~$N$} \,\}$ is a subgroup of~$\integer$ that has finite index.} $$
Thus, in group-theoretic terms, the upshot of the preceding discussion is that we may need to
restrict our attention to a finite-index subgroup. 

The need to pass to a finite-index subgroup happens so often in the theory of infinite groups that there is a name for it: a property holds \emph{virtually} if it becomes true when our attention is restricted to a finite-index subgroup.

\begin{eg} \ 
\begin{enumerate}
\item To say that $G$ is \emph{virtually abelian} means that some finite-index subgroup of~$G$ is abelian.
\item If $G$ is a topological group, then, to say that $G$ is \emph{virtually connected} means that some finite-index subgroup of~$G$ is connected.
\end{enumerate}
\end{eg}

\begin{exer}
What does it mean to say that $G$ is \emph{virtually finite}?
\end{exer}

In this vein, we make the following definition:

\begin{defn}
Suppose
	\begin{itemize}
	\item $H$ is a discrete subgroup of a topological group~$G$,
	\item $\varphi \colon H \to \glr$ is a homomorphism,
	and
	\item $\hat\varphi \colon G \to \glr$ is a continuous homomorphism.
	\end{itemize}
We say $\hat\varphi$ \emph{virtually extends}~$\varphi$ if there is a finite-index subgroup~$H'$ of~$H$, such that $\hat\varphi(h) = \varphi(h)$, for all $h \in H'$.
\end{defn}

Although the proof is not obvious, it turns out that homomorphisms defined on~$\integer^k$ do virtually extend to be defined on all of~$\real^k$:

\begin{prop} \label{ZkAlmostSuperrig}
Suppose $\varphi$ is a group homomorphism from~$\integer^k$ to $\glr$. 
Then $\varphi$ virtually extends to a continuous homomorphism $\hat \varphi \colon G \to \glr$.
 \end{prop}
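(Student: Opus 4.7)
The plan is to find commuting matrices $B_1, \ldots, B_k \in \Mat_{d\times d}(\real)$ and a positive integer $N$ such that $\exp(B_i) = \varphi(e_i)^N$ for each $i$. Given such $B_i$'s, define
\[
\hat\varphi(t_1, \ldots, t_k) = \exp\bigl( (t_1/N) B_1 + \cdots + (t_k/N) B_k \bigr) .
\]
This is continuous, and because the $B_i$'s commute the exponential converts sums in the exponent into products, making $\hat\varphi$ a homomorphism $\real^k \to \glr$. On the finite-index subgroup $N\integer^k$ of $\integer^k$, a direct computation yields
\[
\hat\varphi(Nn_1, \ldots, Nn_k) = \exp\Bigl( \sum_i n_i B_i \Bigr) = \prod_i \exp(B_i)^{n_i} = \prod_i \varphi(e_i)^{N n_i} = \varphi(Nn_1,\ldots,Nn_k) ,
\]
using commutativity of the matrices $\varphi(e_i)$ (which holds because $\integer^k$ is abelian). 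Thus $\hat\varphi$ virtually extends $\varphi$, as desired.

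So the real work is to produce the $B_i$'s. Let $H \subset \glr$ be the Zariski closure of $\varphi(\integer^k)$. Since commutativity of a pair of matrices is a Zariski-closed condition, $H$ is abelian; since $H$ is cut out from $\glr$ by real polynomial equations, it is a real algebraic group, and therefore has only finitely many connected components in the classical topology. The identity component $H^0$ is a closed subgroup of finite index in $H$, so we may choose a single positive integer $N$ with $\varphi(e_i)^N \in H^0$ for every~$i$. Now $H^0$ is a connected abelian Lie subgroup of $\glr$; its Lie algebra $\mathfrak{h} \subset \Mat_{d\times d}(\real)$ is abelian (the bracket vanishes because $H^0$ is abelian), and the matrix exponential restricts to a surjection $\exp \colon \mathfrak{h} \to H^0$ --- indeed, any connected abelian Lie group is isomorphic to $\real^a \times \torus^b$ for some $a,b \geq 0$, and the exponential map from the Lie algebra of such a group is surjective. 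For each $i$, pick $B_i \in \mathfrak{h}$ with $\exp(B_i) = \varphi(e_i)^N$; the $B_i$'s commute automatically because $\mathfrak{h}$ is abelian.

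The hard part is arranging commutativity of the logarithms, not mere existence. For any single invertible real matrix $A$, producing a real logarithm of some power $A^N$ is classical: decompose $A = SU$ into commuting semisimple and unipotent parts, take $\log U$ via the nilpotent-truncated power series, and raise $S$ to a suitable power to avoid negative real eigenvalues. But logarithms of different matrices chosen this way independently generally fail to commute, even when the original matrices do. The point of passing to the identity component $H^0$ of the Zariski closure is precisely that it supplies a single abelian Lie algebra $\mathfrak{h}$ inside which all the $B_i$'s can be chosen simultaneously, turning commutativity into an automatic consequence rather than a delicate coincidence.
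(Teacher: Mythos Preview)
Your proof is correct. The paper does not prove this proposition directly; instead it states it without proof and later proves the stronger Proposition~\ref{ZkSuperrigProp} (strict superrigidity of~$\integer^k$ in~$\real^k$), treating only the case $k=1$. Your argument is essentially the same as that proof: pass to the Zariski closure of $\varphi(\integer^k)$, observe it is abelian with finitely many components, replace $\varphi(e_i)$ by a power landing in the identity component, and exploit the abelian Lie-group structure of that component to build~$\hat\varphi$.

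The only cosmetic difference is in how the last step is packaged. The paper uses the universal cover $\cover{Y^\circ} \iso \real^n$ together with the covering map $\pi \colon \real^n \to Y^\circ$, whereas you work directly with the abelian Lie algebra $\mathfrak h \subset \Mat_{d\times d}(\real)$ and the matrix exponential. For a connected abelian Lie group these are the same thing: the exponential map identifies the Lie algebra with the universal cover and realizes the covering projection. Your version has the advantage of handling general~$k$ uniformly (the paper's proof is written for $k=1$) and of making explicit why commutativity of the logarithms~$B_i$ comes for free --- a point your final paragraph emphasizes nicely. Note also that your construction actually lands in $H^0 \subset \Zar{\varphi(\integer^k)}$, so you have in fact proved the stronger strict-superrigidity statement as well.
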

 
 Unfortunately, this result is usually not useful, because it does not tell us anything about the image of $\hat\varphi$ (other than that it is contained in $\glr$). In practice, if all of the matrices in $\varphi(\integer^k)$ have some nice property, then it is important to know that the matrices in $\hat\varphi(\real^k)$ also have this property. That is, if we have control on the image of~$\varphi$, then we would like to have control on the image of~$\hat\varphi$.
 
\begin{eg} \ 
	\begin{enumerate}
	\item If all of the matrices in $\varphi(\integer^k)$ have determinant~$1$, then all of the matrices
in $\widehat\varphi(\real^k)$ should have determinant~$1$.
	\item If all of the matrices in $\varphi(\integer^k)$ commute with some particular matrix~$A$, then all of the matrices in $\widehat\varphi(\real)$ should commute with~$A$.
	\item If all of the matrices in $\varphi(\integer^k)$ fix a particular vector~$v$, then all of the matrices in $\widehat\varphi(\real^k)$ should fix~$v$.
	\item Let $R = \begin{bmatrix}
 1&0&0&*\\
 0&1&0&*\\
 0&0&1&*\\
 0&0&0&1\\
 \end{bmatrix} \iso \real^3$. If $\varphi(\integer^k) \subset  R$, then it should be the case that $\hat\varphi(\real^k) \subset R$. One needs to know this in order to derive Example~\ref{HomoZkToRd} as a corollary of a result like Proposition~\ref{ZkAlmostSuperrig}.
	\end{enumerate}
\end{eg}

\begin{rem}
The problem that arises here is illustrated by the classical theory of Lagrange interpolation. This theorem states that if $(x_0,y_0), (x_1,y_1), \ldots, (x_n,y_n)$ are any $n+1$ points in the plane (with $x_i \neq x_j$ whenever $i \neq j$), then there is a polynomial curve 
  	$$y = f(x) = a_n x^n + a_{n-1}x^{n-1} + \cdots + a_0$$
of degree~$n$ that passes through all of these points. (It is easy to prove.) Unfortunately, however, even if the specified values $y_0,y_1,\ldots,y_n$ of $f(x)$ at the points $x_0,x_1,\ldots,x_n$ are well controlled (say, all are less than~$1$ in absolute value), it may be the case that $f(x)$ takes extremely large values  at other values of~$x$ that are between $x_0$ and~$x_n$, as illustrated in Figure~\ref{LagrangeInterpFig}.

\begin{figure}[ht]
\includegraphics[scale=0.3]{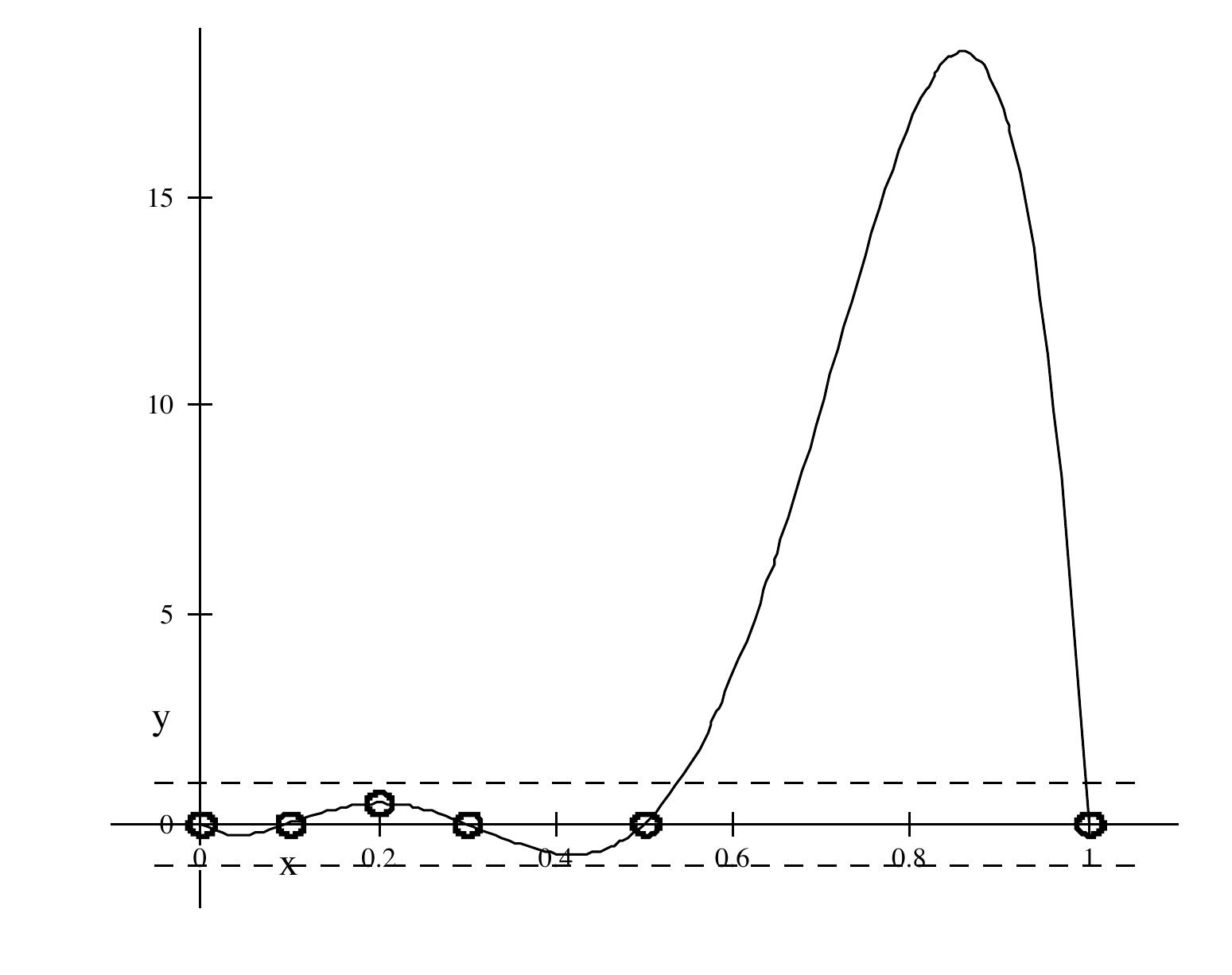}
\caption{The 6 given points all lie in a small band around the $x$-axis, but the quintic curve that interpolates between them travels far from the $x$-axis.}
 \label{LagrangeInterpFig}
\end{figure}

Linear interpolation does not suffer from this defect; all of the points of the interpolating curve will lie in the convex hull of the given points.
 \end{rem}

In order to guarantee that having control on the values of~$\varphi$ will guarantee that we have control on the values of~$\hat\varphi$, we will require $\hat\varphi(H)$ to be contained in a certain subgroup $\Zar{H}$ of $\glr$ that is closely related to~$\varphi(H)$. (This subgroup is called the ``Zariski closure'' of $\varphi(H)$.) 

The formal definition of the \emph{Zariski closure}~$\Zar H$ of a subgroup~$H$ of $\glr$ is not important for our purposes, if one simply accepts that it is, in a certain precise sense, the smallest natural, closed, virtually connected subgroup of $\glr$ that contains~$H$. 
It can be thought of as the group-theoretic analogue of a convex hull.

\begin{defn} \label{StrictSuperrigDefn}
Let $H$ be a discrete subgroup of a topological group~$G$. Saying $H$ is \emph{{\rm(}strictly{\rm)} superrigid} in~$G$ means, for all~$d$, that 
if $\varphi \colon H \to \glr$ is any homomorphism,
then $\varphi$ virtually extends to a continuous automorphism $\hat \varphi \colon G \to \glr$, such that $\hat\varphi(G) \subset \Zar{\varphi(H)}$.
\end{defn}

We have the following example:

\begin{prop} \label{ZkSuperrigProp}
$\integer^k$ is strictly superrigid in~$\real^k$. 
\end{prop}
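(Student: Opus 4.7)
The plan is to construct $\hat\varphi$ by hand inside the Zariski closure, so that image containment is automatic rather than an afterthought. Set $L := \Zar{\varphi(\integer^k)}$. Since $\varphi(\integer^k)$ is abelian (being the image of an abelian group) and commutativity is a polynomial condition on matrices, the Zariski closure of an abelian subgroup is again abelian; thus $L$ is an abelian subgroup of $\glr$. Because $L$ is virtually connected (part of the definition of Zariski closure), its identity component $L^0$ has finite index in~$L$, so $\varphi^{-1}(L^0)$ is a finite-index subgroup of~$\integer^k$. Every finite-index subgroup of~$\integer^k$ contains $N\integer^k$ for some positive integer~$N$, so I would pass to $H' := N\integer^k$; by construction $\varphi(H') \subset L^0$.

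Next I would exploit the Lie-group structure of the connected abelian group~$L^0$: as a connected abelian real Lie group, it is isomorphic to $\real^a \times \torus^b$ for some $a,b \geq 0$, so there is a surjective continuous homomorphism $\pi \colon \real^m \to L^0$ (with $m = a+b$) whose kernel is discrete---namely, the universal cover. Because $H'$ is free abelian on the generators $Ne_1, \ldots, Ne_k$, I can arbitrarily choose one preimage $y_i \in \real^m$ of $\varphi(Ne_i)$ under~$\pi$ for each~$i$, and then define a homomorphism $\tilde\varphi \colon H' \to \real^m$ by $\tilde\varphi(\sum n_i\, Ne_i) = \sum n_i\, y_i$. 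By construction $\pi \circ \tilde\varphi = \varphi|_{H'}$. Now extend $\tilde\varphi$ linearly from the lattice~$H'$ to all of~$\real^k$ by $\Phi(x_1, \ldots, x_k) := \sum (x_i/N)\, y_i$, and finally set $\hat\varphi := \pi \circ \Phi \colon \real^k \to L^0 \subset \glr$.

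Verification is routine: $\hat\varphi$ is continuous and a group homomorphism (composition of such; note that $\real^k$ is abelian, so any linear map out of it is automatically a homomorphism); its image lies in $L^0 \subset L = \Zar{\varphi(\integer^k)}$; and for any $n \in H'$ we compute $\hat\varphi(n) = \pi(\tilde\varphi(n)) = \varphi(n)$, so $\hat\varphi$ virtually extends~$\varphi$ with witnessing subgroup~$H'$. The main obstacle is not in the construction itself but in invoking the structural black boxes that make it work: that the Zariski closure of an abelian set is abelian, that a Zariski-closed subgroup of $\glr$ is virtually connected, and that a connected abelian real Lie group is isomorphic to $\real^a \times \torus^b$. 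Once these facts are accepted, what remains is just lifting a homomorphism through a covering of an abelian group---elementary because $\integer^k$ is free---and extending a lattice homomorphism to $\real^k$ by linearity.
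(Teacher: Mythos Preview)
Your proof is correct and follows essentially the same route as the paper's: pass to the identity component of the abelian Zariski closure via a finite-index subgroup, lift generators through the universal cover $\real^m \to L^0$, extend linearly, and project back down. The paper treats only $k=1$ ``for simplicity'' and picks a single integer~$m$ with $\varphi(m)\in Y^\circ$, whereas you carry out the general case directly by choosing $N\integer^k$ and lifting each basis vector~$Ne_i$; this is exactly the straightforward generalization the paper has in mind, and the structural black boxes you flag (abelian Zariski closure, virtual connectedness, $\cover{L^0}\cong\real^m$) are the same ones invoked there.
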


\begin{proof}[Proof {\rm(optional)}]
For simplicity, let us assume $k = 1$; thus, we wish to show $\integer$ is strictly superrigid in~$\real$. 

Given a homomorphism $\varphi \colon \integer \to \glr$, let $Y = \Zar{\varphi(\integer)}$, and let $Y^\circ$ be the connected component of~$Y$ that contains~$e$ (so $Y^\circ$ is a closed subgroup of~$Y$). Since the Zariski closure~$Y$ has only finitely many connected components, there is some nonzero $m \in \integer$, such that $\varphi(m) \in Y^\circ$.

Since $\varphi(\integer)$ is abelian, it is not difficult to see that its Zariski closure is also abelian. So $Y^\circ$ is an abelian Lie group; therefore, the universal cover~$\cover{Y^\circ}$ of~$Y^\circ$ is a simply connected, abelian Lie group. One can show that this implies $\cover{Y^\circ}$ is isomorphic to~$\real^n$, for some~$n$. So there is no harm in assuming that $\cover{Y^\circ}$ is actually equal to~$\real^n$.
	\begin{itemize}
	\item Let $\pi \colon \real^n \to Y^\circ$ be the covering map with $\pi(0) = e$, so $\pi$ is a continuous homomorphism.
	\item Choose some $\vector y \in \real^n$, such that $\pi(\vector y) = \varphi(m)$.
	\item Define $\cover\varphi \colon \real \to \real^n$ by 
		$$\cover\varphi(x) = \frac{x}{m} \vector y .$$
	\item Let $\hat\varphi \colon \real \to Y^\circ$ be the composition $\pi \circ \cover\varphi$.
	\end{itemize}
Then:
	\begin{itemize}
	\item $\hat\varphi$ is a composition of continuous homomorphisms, so it is a continuous homomorphism.
	\item We have
		$$\hat\varphi(m) 
		= \pi \bigl( \cover\varphi(m) \bigr)
		= \pi \left(  \frac{m}{m}\vector y \right)
		= \pi \bigl(  \vector y \bigr)
		= \varphi(m) ,$$
	so $\hat\varphi$ is equal to $\varphi$ on the entire cyclic subgroup generated by~$m$. Since $m \neq 0$, this is a finite-index subgroup of~$\integer$.
	\item We have 
		$$\hat\varphi(\real) 
		= \pi \bigl( \cover\varphi(\real) \bigr) 
		\subset \pi(\real^n) 
		= Y^\circ
		\subset Y 
		= \Zar{\varphi(\integer)}
		.$$
	\end{itemize}
Thus, $\hat\varphi$ is a continuous homomorphism that virtually extends~$\varphi$, such that $\hat\varphi(\real) \subset \Zar{\varphi(\integer)}$.
\end{proof}

\begin{rem} \label{NotStrictRem}
The term ``strictly'' is used in Definition~\ref{GlobalRigDefn} to indicate that $\hat\varphi$ is required to be exactly equal to~$\varphi$ (on a finite-index subgroup). If we drop this modifier, then it means that we do not require exact equality; instead, we allow an error that is uniformly bounded (on a finite-index subgroup of~$H$). That is, we require $\hat\varphi(h) = \varphi(h) \pmod{K}$, where $K$ is some compact group.
\end{rem}

\subsection{Definition of the Zariski closure}

The concept of Zariski closure is taken from algebraic geometry. In that field, one works only with polynomials (and rational functions), not with more general continuous functions, and the notion of Zariski closure is a reflection of this.
For the reader who wants details, we provide the full definition; others are welcome to skip ahead to the following section. 

\begin{rem}
In linear algebra, one works only with linear functions, and the definition of \emph{linear span} is a reflection of this:
	\begin{itemize}
	\item A subset~$V$ of $\real^d$ is a \emph{linear subspace} if it is the set of solutions of a collection of linear equations; more precisely, this means there are linear functionals $\lambda_i \colon \real^d \to \real$, such that 
	$$ v \in V
	\quad\iff\quad
	\text{$\lambda_i(v) = 0$, for all~$i$} .$$
	\item The \emph{linear span} $\langle S \rangle$ of a subset~$S$ of~$\real^d$ is the unique smallest linear subspace of~$\real^d$ that contains~$S$.
	\end{itemize}
The Zariski closure is perfectly analogous, replacing ``linear functional on~$\real^d$'' with ``polynomial function on $\glr$.''
\end{rem}

\begin{defn} \ 
	\begin{itemize}
	\item The collection $\Mat_{d \times d}(\real)$ of all $d \times d$ matrices can naturally be identified with $\real^{d^2}$. A function $P \colon \Mat_{d \times d}(\real) \to \real$ is said to be a \emph{polynomial} if becomes a polynomial (in $d^2$ variables) on~$\real^{d^2}$ after making this identification.
	\item The group $\glr$ can be embedded in the group $\SL(d+1,\real)$ of $(d+1) \times (d+1)$ matrices of determinant~$1$, via the map
		$$ \rho(A) = 
		\begin{bmatrix} 
		& & &0 \\ 
		&\hbox{\Huge $A$}& &\vdots \\
		&& & 0 \\
		0 & \cdots & 0 & 1/\det A 
		\end{bmatrix} 
		.$$
	A function $f \colon \glr \to \real$ is said to be a \emph{polynomial} if there exists a polynomial $P \colon \Mat_{(d+1) \times (d+1)}(\real) \to \real$, such that
		$$ f(g) = P \bigl( \rho(g) \bigr) ,$$
	for all $g \in \glr$.
	\item A subset $V$ of $\glr$ is \emph{Zariski closed} if it is the set of solutions of a collection of polynomial equations; more precisely, this means there are polynomial functions $f_i \colon \glr \to \real$, such that 
	$$ v \in V
	\quad\iff\quad
	\text{$f_i(v) = 0$, for all~$i$} .$$
	\item The \emph{Zariski closure} $\Zar{V}$ of a subset~$V$ of~$\glr$ is the unique smallest Zariski closed subset of $\glr$ that contains~$V$.
	\end{itemize}
\end{defn}

\begin{rem} \ 
	\begin{enumerate}
	\item If $V$ is a subgroup of $\glr$, then $\Zar V$ is also a subgroup of $\glr$.
	\item $\Zar V$ is a closed subset of $\glr$.
	\item $\Zar V$ has only finitely many connected components.
	\end{enumerate}
The first two of these observations are not difficult to prove. The third is rather difficult, but it is a generalization of the obvious fact that a univariate polynomial $f(x)$ can have only finitely many zeroes.
\end{rem}

\section{Examples of Superrigid Subgroups}

Proposition~\ref{ZkSuperrigProp} tells us that $\integer^k$ is strictly superrigid in~$\real^k$, and we will now see other examples of superrigid subgroups. 

Let us first specify the type of group~$G$ that will be considered:

\begin{defn}
We say $G$ is a \emph{Lie group} if it is a closed, connected subgroup of $\glr$, for some~$d$.
\end{defn}

\begin{eg}
$\real^d$ is (isomorphic to) a Lie group.
\end{eg}

\begin{warn}
Other authors have a less restrictive definition of ``Lie group,'' but this will suffice for our purposes.
\end{warn}

Now we wish to describe the subgroups~$H$ of~$G$ that complete the analogy
	$$ \text{$\integer^k$ is to~$\real^k$ \quad as \quad $H$ is to~$G$}. $$
Here are the basic properties of~$\integer^k$:
	\begin{enumerate}
	 \item $\integer ^k$ is a discrete subgroup of~$\real^k$.
	 \item The quotient space $\real^k / \integer^k$ is compact.
	 (Indeed, $\real^k/\integer^k$ is the $k$-torus $\torus^k$, which is well known to be compact.)
	\end{enumerate}
The second of these properties can be restated as the assertion that there is a compact subset of $\real^k$ that contains a representative of every coset of~$\integer^k$. 
Thus, $\integer^k$ is a (cocompact) lattice, in the following sense:

\begin{defn}
Suppose $H$ is a discrete subgroup of a Lie group~$G$. We say $H$ is a (cocompact) \emph{lattice} in~$G$ if  there is a compact subset of~$G$ that contains a representative of every coset of~$H$. 
\end{defn}

\begin{rem}
Cocompact lattices suffice for most of our purposes, but we will sometimes allow~$H$ to satisfy the condition that some set of coset representatives has finite measure.
Since every compact set has finite measure, but not every set of finite measure is compact, this is a more general condition.
\end{rem} 

For the moment, let us assume that $G$ is solvable:

\begin{defn} 
Let $G$ be a connected subgroup of $\glc$.
We say $G$ is \emph{solvable} if and only if it is upper triangular
	$$G \subset \begin{bmatrix}
 \complex^{\times}&\complex&\complex\\
 0&\complex^{\times}&\complex\\
 0&0&\complex^{\times}
\end{bmatrix},$$
or can be made so by a change of basis.
\end{defn}

\begin{rem}
The following example is the base case of an inductive proof that if we restrict our attention only to connected groups, then the above definition agrees with the usual definition of solvable groups in terms of chains of normal subgroups with abelian quotient groups.
\end{rem}

\begin{eg}
All abelian groups are solvable.
\end{eg}

\begin{proof}
It is well known that every matrix can be triangularized over any algebraically closed field, such as~$\complex$.
(That is, there is a change of basis that makes the matrix upper triangular.)
This implies that every cyclic group is solvable.

More generally, it is not difficult to show that any collection of pairwise commuting matrices can be simultaneously triangularized. (That is, there is a single change of basis that makes all of the matrices upper triangular.) This implies that every abelian group is solvable.
 \end{proof}

\begin{egs} \ 
\begin{enumerate}
\item Let 
$$ \text{$G_1 = \begin{bmatrix}
 1&0&0&\real\\
 0&1&0&\real\\
 0&0&1&\real\\
 0&0&0&1
 \end{bmatrix}
 \iso \real^3$
 and
$H_1  = \begin{bmatrix}
 1&0&0&\integer\\
 0&1&0&\integer\\
 0&0&1&\integer\\
 0&0&0&1
 \end{bmatrix}
 \iso \integer^3$.} $$
 Then:
	 \begin{itemize}
	 \item $H_1 \iso \integer^3$ and $G_1 \iso \real^3$, so it is clear that $H_1$ is a lattice in~$G_1$.
 	\item We have already seen that $H_1$ is strictly superrigid in~$G_1$.
	\item $G_1$ is the obvious connected group containing~$H_1$, so $\Zar{H_1} = G_1$. 
	\end{itemize}

\item Let 
$$ \text{$G_2 = \begin{bmatrix}
 1&\real&\real&\real\\
 0&1&\real&\real\\
 0&0&1&\real\\
 0&0&0&1
 \end{bmatrix}$
and
$H_2  = \begin{bmatrix}
 1&\integer&\integer&\integer\\
 0&1&\integer&\integer\\
 0&0&1&\integer\\
 0&0&0&1
 \end{bmatrix}$.} $$
 Then:
	 \begin{itemize}
	 \item It is not difficult to see that $H_2$ is a lattice in~$G_2$. Namely, if we let $I = [0,1]$ be the unit interval then
	 $$ \begin{bmatrix}
 1&I&I&I\\
 0&1&I&I\\
 0&0&1&I\\
 0&0&0&1
 \end{bmatrix}$$
 is a compact set that contains a representative of every coset.

\item Our main result, to be stated below, will show that $H_2$ is strictly superrigid in~$G_2$.
\item $G_2$ is the obvious connected group containing~$H_2$, so $\Zar{H_2} = G_2$. 
\end{itemize}



\item Let
$$ \text{$G_3 = \begin{bmatrix}
 1&\real&\complex\\
 0&1&0\\
 0&0&1
 \end{bmatrix}$ 
 and
 $H_3 = \begin{bmatrix}
 1&\integer&\integer +\integer i\\
 0&1&0 \\
 0&0&1
 \end{bmatrix}$.} $$
Then:
	\begin{itemize}
	\item It is not difficult to see that $H_3$ is a lattice in~$G_3$. Indeed, $H_3 \iso \integer^3 \subset \real^3 \iso G_3$.
	\item We know that $H_3$ is strictly superrigid in~$G_3$.
 	\item $G_3$ is the obvious connected group containing~$H_3$, so  $\Zar{H_3} = G_3$. 
	\end{itemize}

\end{enumerate}
\end{egs}

\begin{eg}
Let
$$ \text{$G' = \bigset{\begin{bmatrix}
 1&t&\complex\\
 0&1&0\\
 0&0&e^{2\pi it}
 \end{bmatrix}}
 {t \in \real}$
 and
 $H' = \begin{bmatrix}
 1&\integer&\integer +\integer i\\
 0&1&0 \\
 0&0&1
 \end{bmatrix}$.} $$
 Unlike our previous examples, the matrix entries of elements of~$G'$ cannot be chosen independently of each other: the $(1,2)$-entry of any element of~$G'$ uniquely determines its $(3,3)$-entry. 
  However, the relation between these entries is defined by a transcendental function, not a polynomial, so, as far as an algebraic geometer is concerned, these entries have no correlation at all. This means that in the Zariski closure of~$G'$, these entries become decoupled and can be chosen independently. Thus,
  	$$ \Zar{G'} = \begin{bmatrix}
 1&\real&\complex\\
 0&1&0\\
 0&0&\torus
 \end{bmatrix} .$$

 When the $(1,2)$-entry~$t$ of an element of~$G'$ is an integer, the $(3,3)$-entry $e^{2\pi i t}$ is~$1$, so we see that $H' \subset G'$. In fact, it is not difficult to see that $H'$ is a lattice in~$G'$. 

On the other hand, we have $H' = H_3 \subset G_3$, so 
$H'$ is also a lattice~$G_3$. Furthermore, we have
 	$$ \Zar{H'} = \Zar{H_3} = G_3 \neq \Zar{G'} .$$
These observations can be used to show that $H'$ is \emph{not} strictly superrigid in~$G'$.
\end{eg}

\begin{prop} \label{NotSuperEgProp}
$H'$ is not strictly superrigid in~$G'$.

In particular,  the inclusion map $\varphi  \colon H' \hookrightarrow \GL(3,\complex)$
does not extend to a continuous homomorphism $\widehat\varphi  \colon G' \to \Zar{H'}$.
\end{prop}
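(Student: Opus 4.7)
The strategy is to exploit the fact that the target $\Zar{H'} = G_3$ is abelian while $G'$ is not. Any continuous homomorphism $\hat\varphi \colon G' \to G_3$ must kill the commutator subgroup $[G',G']$, so it suffices to exhibit a nonzero element of $H'$ that lies in both $[G',G']$ and every finite-index subgroup of $H'$: such an element would be forced to map to itself under a virtual extension of the inclusion, but also to the identity because $G_3$ is abelian.

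First I would parametrize $G'$ by pairs $(t,z) \in \real \times \complex$ and compute the group law via matrix multiplication, obtaining $(t_1,z_1)\cdot(t_2,z_2) = (t_1+t_2,\, z_2 + z_1 e^{2\pi i t_2})$. Since $(t,z) \mapsto t$ is a homomorphism from $G'$ onto the abelian group $\real$, the commutator subgroup is contained in its kernel $\{(0,z) : z \in \complex\}$. A single commutator calculation, for instance $[(1/2,0),(0,z)] = (0,-2z)$, gives the reverse inclusion $\{(0,z) : z \in \complex\} \subseteq [G',G']$, so the two coincide.

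Now suppose for contradiction that $\hat\varphi \colon G' \to G_3$ is a continuous homomorphism that agrees with the inclusion $\varphi$ on some finite-index subgroup $H''$ of $H' \iso \integer^3$. The subgroup $H'' \cap \{(0,m+ki) : m,k \in \integer\}$ has finite index in $\integer + \integer i$ (because $H''$ contains $N\integer^3$ for some $N \in \natural$), so it contains some nonzero element $(0,w)$. By hypothesis, $\hat\varphi\bigl((0,w)\bigr) = \varphi\bigl((0,w)\bigr) = (0,w)$, which is nontrivial in $G_3$. But $(0,w) \in [G',G']$ and $G_3$ is abelian, forcing $\hat\varphi\bigl((0,w)\bigr) = e$. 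This is the desired contradiction.

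The main obstacle is to verify that $[G',G']$ genuinely contains a lattice point of $H'$; this is precisely what the commutator computation at $t = 1/2$ achieves, since $z \mapsto -2z$ maps $\integer + \integer i$ to a finite-index sublattice of itself. Continuity of $\hat\varphi$ plays no role in the argument beyond ensuring we are working with an honest homomorphism---the obstruction is purely algebraic, rooted in the non-commutativity of $G'$ against the commutativity of $\Zar{H'}$.
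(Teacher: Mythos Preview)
Your proof is correct and follows essentially the same line as the paper's: both exploit that $\Zar{H'} = G_3$ is abelian, so any homomorphism $G' \to G_3$ kills $[G',G']$, and then observe that $[G',G'] = \{(0,z) : z \in \complex\}$ meets $H'$ nontrivially. The only real difference is that you are more careful than the paper on two points: you verify the identification of $[G',G']$ by an explicit commutator computation (the paper simply asserts it), and you address the \emph{virtual} extension required by strict superrigidity by intersecting with a finite-index subgroup $H''$, whereas the paper's argument literally only rules out an honest extension and leaves the passage to a finite-index subgroup implicit.
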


\begin{proof}
Note that
	$$\Zar{H'} = \Zar{H_3} = G_3 $$
is abelian. Therefore, $\hat\varphi$ must be trivial on the entire commutator subgroup $[G',G']$ of~$G'$. 
We have
	$$ [G',G'] 
	= \begin{bmatrix}
 1&0&\complex\\
 0&1&0\\
 0&0&1
 \end{bmatrix} 
 \supset \begin{bmatrix}
 1&0&\integer + \integer i\\
 0&1&0\\
 0&0&1
 \end{bmatrix} 
 ,$$
 so the supposed extension $\hat\varphi$ is trivial on some nontrivial elements of~$H'$. This contradicts the fact that $\varphi$, being an inclusion, has trivial kernel.
  \end{proof}

\begin{rem} \ 
\begin{enumerate}
\item $H_3$ is a strictly superrigid lattice in~$G_3$, but we constructed $G'$ by adding some rotations to~$G_3$ that $H_3$ knows nothing about. A homomorphism defined on~$H_3$ will extend to~$G_3$, but it need not be compatible with the additional rotations that appear in~$G'$.
\item One can show that the above example is typical: it is always the case that if $\Zar{H} \not= \Zar{G}$, then some of the rotations associated to elements of~$G$ do not come from rotations associated to~$H$. Roughly speaking, the concept of ``associated rotation'' can be defined by
	$$\rot\begin{bmatrix}
 \alpha &*\\
 0&\beta
 \end{bmatrix}
  = \begin{bmatrix}
{\alpha }/{ |\alpha |} &0\\
 0&{\beta }/{ |\beta |} 
 \end{bmatrix}.$$
\end{enumerate}
\end{rem}

In general, if $\Zar{H} \neq \Zar{G}$, then the natural connected subgroup containing~$H$ is not~$G$, but some other group; there are parts of~$G$ that have nothing to do with~$H$. A homomorphism defined on~$H$ cannot be expected to know about the structure in this part of~$G$, so there is no reason to expect the homomorphism to be compatible with this additional structure.

The above considerations might lead one to believe that if $\Zar{H} \neq \Zar{G}$, then $H$ is \emph{not} strictly superrigid in~$G$. This conclusion is correct in spirit, but there is a technical complication\footnote
{A given group~$G$ can usually be embedded into $\GL(d,\real)$ in many different ways, and $\Zar{H}$ may be equal to $\Zar{G}$ for some of these embeddings, but not others. The canonical matrix representation that can be used is the so-called ``adjoint representation,'' which is not an embedding: its kernel is the center $Z(G)$, and the Zariski closure should be calculated modulo this kernel.}
 that leads to the fine print in the statement of the following result.
 The reader is invited to simply ignore this fine print.

\begin{prop}
If $H$ is strictly superrigid in~$G$, then
 $\Zar{H} = \Zar{G}$
 \quad \hbox{\color{grey}\smaller[2]\rm({$\mod{\Zar{Z(G)}}$}).} 
\end{prop}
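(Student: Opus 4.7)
My plan is to feed strict superrigidity the most natural homomorphism defined on~$H$---namely the inclusion into $\GL(d,\real)$, where $G$ is embedded---and then to compare the resulting extension with the original embedding of~$G$. The two homomorphisms are forced to agree on a finite-index subgroup of~$H$, and using that $H$ is a lattice, I will push this local agreement to the conclusion that the two differ on all of~$G$ only by a bounded (compact) amount. That will place $\Zar{G}$ inside a compact neighborhood of $\Zar{H}$; the opposite inclusion $\Zar{H} \subset \Zar{G}$ is automatic from $H \subset G$, and the compact discrepancy is exactly the source of the ``mod $\Zar{Z(G)}$'' fine print in the statement.

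In detail, fix $d$ with $G \subset \GL(d,\real)$ and apply strict superrigidity to the inclusion $\varphi \colon H \hookrightarrow \GL(d,\real)$. This produces a continuous homomorphism $\hat\varphi \colon G \to \GL(d,\real)$ with $\hat\varphi(G) \subset \Zar{\varphi(H)} = \Zar{H}$, together with a finite-index subgroup $H' \leq H$ on which $\hat\varphi(h) = h$. Define $f \colon G \to \GL(d,\real)$ by $f(g) = g \cdot \hat\varphi(g)^{-1}$. Although $f$ is not a homomorphism, a direct computation using $\hat\varphi(h') = h'$ shows $f(gh') = f(g)$ for every $h' \in H'$, so $f$ descends to a continuous map on $G/H'$. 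Since $H'$ has finite index in the lattice $H$, it is itself a lattice; in the cocompact case $G/H'$ is compact, so $K := f(G)$ is a compact subset of $\GL(d,\real)$. Rearranging yields $G \subset K \cdot \hat\varphi(G) \subset K \cdot \Zar{H}$, and taking Zariski closures gives $\Zar{G} \subset \Zar{K \cdot \Zar{H}}$; thus $\Zar{G}$ differs from $\Zar{H}$ by at most the Zariski closure of a compact set.

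The main obstacle is disposing of that compact discrepancy---which is precisely the issue flagged by the footnote. Zariski closures of compact subgroups of $\GL(d,\real)$ can be strictly larger than the compact groups themselves (as in the winding-line example $G'$ preceding Proposition~\ref{NotSuperEgProp}), so the compact error is not Zariski-invisible in general. The remedy is to rerun the entire argument with the adjoint representation $\mathrm{Ad} \colon G \to \GL(d',\real)$ on the Lie algebra of~$G$ in place of the inclusion; since $\ker(\mathrm{Ad}) = Z(G)$, the compact discrepancy coming from the $f$-argument is absorbed into the center when one translates back to~$G$, yielding the desired equality $\Zar{H} = \Zar{G}$ modulo $\Zar{Z(G)}$. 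A secondary technical point, needed to cover the general lattice case, is to verify that $f(G)$ remains bounded even when $G/H'$ only has finite invariant volume rather than being compact.
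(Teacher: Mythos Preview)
The paper does not actually supply a proof of this proposition; it is stated without proof, motivated only by the preceding worked example (Proposition~\ref{NotSuperEgProp}) and the informal remarks about ``associated rotations.'' Your opening move---feeding strict superrigidity the inclusion $\varphi\colon H\hookrightarrow\GL(d,\real)$ and comparing the resulting $\hat\varphi$ with the identity embedding of~$G$---is exactly the idea behind that example, so in spirit you are following the paper. Your compactness argument via $f(g)=g\,\hat\varphi(g)^{-1}$ is correct and is a genuine addition: $f$ does descend to $G/H'$, and in the cocompact case this forces $K=f(G)$ to be compact (in particular $\ker\hat\varphi\subset K$ is compact, since $f$ restricted to the kernel is the identity). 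This legitimately yields $G\subset K\cdot\Zar{H}$.

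The gap is in your final paragraph. You assert that rerunning the argument in the adjoint representation ``absorbs the compact discrepancy into the center,'' but you do not say why. The fact that $\ker(\mathrm{Ad})=Z(G)$ tells you only that $\mathrm{Ad}$ kills $Z(G)$; it does \emph{not} by itself explain why the compact set $K$ produced by the $f$-argument (now carried out inside $\GL(\mathfrak g)$) should be trivial, central, or Zariski-invisible. As you yourself note, Zariski closures of compact sets can be strictly larger than the sets themselves, so the inclusion $\mathrm{Ad}(G)\subset K\cdot\Zar{\mathrm{Ad}(H)}$ does not immediately give $\Zar{\mathrm{Ad}(G)}=\Zar{\mathrm{Ad}(H)}$. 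Some further input is needed here---for instance, an argument that $\mathrm{Ad}(G)$ already coincides with the identity component of its own Zariski closure, or that the compact normal subgroup $\ker\hat\varphi$ is forced to lie in $Z(G)$---and that is precisely where the content of the ``mod $\Zar{Z(G)}$'' fine print lives. As written, your last step is an assertion rather than an argument, so the proof is incomplete at exactly the point the paper's footnote warns is delicate.
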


By passing to the universal cover, let us assume that $G$ is simply connected.
Then the converse of the above proposition is true for solvable groups:

\begin{thm} \label{SuperSolvThm}
A lattice $H$ in a simply connected, solvable Lie group~$G$ is strictly superrigid
 if and only if $\Zar H =\Zar G$
  \quad \hbox{\color{grey}\smaller[2]\rm({$\mod{\Zar{Z(G)}}$}).} 
 \end{thm}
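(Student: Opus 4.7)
The ``only if'' direction is precisely the preceding proposition, so the real work is the converse. My plan is to induct on the derived length of~$G$, with Proposition~\ref{ZkSuperrigProp} as the base case. The inductive step descends through the commutator series and splices together the resulting homomorphisms using the compatibility built into the Zariski-closure hypothesis.

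\emph{Base case.} If $G$ is simply connected and abelian, then $G \iso \real^k$ and $H \iso \integer^k$, so Proposition~\ref{ZkSuperrigProp} immediately produces a continuous virtual extension $\hat\varphi$ of any given $\varphi\colon H \to \glr$. Tracing through the proof of that proposition, the image lies inside $Y^\circ \subset \Zar{\varphi(H)}$, so the strict superrigidity conclusion holds.

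\emph{Inductive step.} Let $N = [G, G]$. Since $G$ is simply connected and solvable, $N$ is closed, connected, simply connected, and nilpotent, with derived length strictly less than that of~$G$. By a theorem of Mostow, $HN$ is closed in~$G$, $H \cap N$ is a lattice in~$N$, and $HN/N$ is a lattice in the abelian quotient $G/N$. The hypothesis $\Zar H = \Zar G \pmod{\Zar{Z(G)}}$ passes to~$N$ and to $G/N$, so induction delivers a continuous homomorphism $\psi\colon N \to \Zar{\varphi(H\cap N)}$ virtually extending $\varphi|_{H\cap N}$, and the abelian base case virtually extends the induced map on $G/N$.

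\emph{The main obstacle.} The heart of the argument is splicing these two extensions into a single continuous homomorphism $\hat\varphi\colon G \to \Zar{\varphi(H)}$. Concretely: for each $g \in G$, conjugation by~$g$ is an automorphism of~$N$, and this automorphism must correspond under~$\psi$ to conjugation inside $\Zar{\varphi(H)}$ by a suitable element. On~$H$ itself this compatibility is automatic, since $\varphi$ is a homomorphism on all of~$H$; the task is to propagate it from~$H$ to~$G$. Zariski-density of $H\cap N$ in~$N$ (and of~$H$ in~$\Zar H = \Zar G$) is what lets the compatibility spread, and this is precisely where the hypothesis $\Zar H = \Zar G \pmod{\Zar{Z(G)}}$ enters: without it, $G$ could contain rotations that~$H$ does not see (as in Proposition~\ref{NotSuperEgProp}), and the splicing would be obstructed. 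Once compatibility is in hand, $\hat\varphi$ is defined by the usual semidirect-product formula, is continuous because both pieces are, virtually extends~$\varphi$ on a finite-index subgroup of~$H$, and automatically lands in $\Zar{\varphi(H)}$ because each of its two building blocks does.
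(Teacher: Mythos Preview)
The paper does not actually prove this theorem. It is stated, followed by the sentence ``This theorem provides a complete characterization of the strictly superrigid lattices in the solvable case,'' and the proof is deferred to the literature: the annotation to reference~\cite{Witte-SuperSynd} reads ``An exposition of the proof of the superrigidity theorem for solvable groups \pref{SuperSolvThm}.'' (The preceding Proposition, which you invoke for the ``only if'' direction, is likewise stated without proof.) So there is no argument in the paper against which your proposal can be compared.

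On its own merits, your outline has a genuine structural gap at the splicing step. You write that $\hat\varphi$ is ``defined by the usual semidirect-product formula,'' but $G$ is not in general a semidirect product of $N=[G,G]$ by $G/N$. The Heisenberg group already shows this: there $N$ is the one-dimensional center, so in any splitting $N \rtimes (G/N)$ the action of $G/N$ on~$N$ would be trivial and $G$ would be abelian. Thus the extension you build on~$N$ and the one you build on~$G/N$ cannot simply be glued along a section; a $2$-cocycle obstruction has to be controlled, and your sketch does not touch it. The published arguments (\cite{Witte-SolvRig}, \cite{Witte-SuperSynd}) proceed quite differently: rather than descending the derived series of~$G$, they construct the \emph{syndetic hull} of~$H$---a simply connected solvable Lie group in which $H$ sits as a Zariski-dense lattice---extend $\varphi$ there in one step, and then use the hypothesis $\Zar{H}=\Zar{G}$ (mod $\Zar{Z(G)}$) to transport that extension to~$G$.
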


This theorem provides a complete characterization of the strictly superrigid lattices in the solvable case.

\subsection{Brief discussion of groups that are not solvable}
 
An extensive structure theory has been developed for Lie groups. Among other things, it is known that these groups can be classified into three basic types:
\begin{itemize}
 \item solvable (e.g., $\real^k$),
 \item semisimple (e.g., $\SL(k,\real)$),
 or
 \item a combination of the above (e.g., $G = \real^k \times \SL(k,\real)$).
 \end{itemize}

In the preceding section, we constructed lattices in solvable groups by taking the integer points in~$G$. For example, $\integer^k$ is a lattice in~$\real^k$. (One might note that, in the case of $H_3$, we used Gaussian integers, not only the ordinary integers.) It turns out that the same construction can be applied to many groups that are not solvable. For example, $\SL(k,\integer)$ is a lattice in $\SL(k,\real)$.

It is known that if $G$ is a combination of a solvable group and a semisimple group, then, roughly speaking, any lattice in~$G$ also has a decomposition into a solvable part and a semisimple part. For example, $ \integer^k \times \SL(k,\integer)$ is a lattice in $\real^k \times \SL(k,\real)$.

The following theorem shows that deciding whether or not~$H$ is superrigid reduces to the same question about its semisimple part:

\begin{thm} \label{SuperComboThm}
A lattice $H$ in a simply connected Lie group~$G$ is superrigid if and only if
 \begin{itemize}
 \item the semisimple part of~$H$ is superrigid,
 and
  \item $\Zar{H} =\Zar{G}$
 \qquad
{\color{grey}\smaller[2]\rm($\mod \Zar{Z(G)}
  \cdot K$, where $K$ is a compact, normal subgroup of~$\Zar{G}$).}
  \end{itemize}
   \end{thm}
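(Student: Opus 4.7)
The plan is to reduce this to Theorem \ref{SuperSolvThm} via the Levi decomposition, using the semisimple hypothesis as a black box. Write $G = R \rtimes S$, where $R$ is the solvable radical of~$G$ and $S$ is a Levi subgroup (hence semisimple). A classical structural result on lattices in simply connected Lie groups (essentially Mostow's theorem) tells us that, after passing to a finite-index subgroup, $H$ admits a compatible decomposition $H = H_R \cdot H_S$, where $H_R := H \cap R$ is a lattice in~$R$ (this is the ``solvable part'' of~$H$), and $H_S$ projects to a lattice in~$S$ (this is the ``semisimple part''). Crucially, $H_S$ normalizes~$H_R$ inside~$G$, mirroring the way $S$ normalizes~$R$.

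For the forward direction, suppose $H$ is superrigid. The Zariski-closure condition is immediate from the proposition preceding the theorem. To see the semisimple part is superrigid, given a homomorphism $\psi \colon H_S \to \glr$, precompose with the (virtual) projection $H \to H_S$ to get a homomorphism $\varphi \colon H \to \glr$; apply superrigidity of~$H$ to extend this to $\hat\varphi \colon G \to \glr$, and restrict to~$S$ to obtain the extension of~$\psi$. A short argument shows the Zariski-closure condition descends from~$H$ to~$H_S$.

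For the reverse direction, suppose both hypotheses hold, and let $\varphi \colon H \to \glr$ be a homomorphism. First apply Theorem \ref{SuperSolvThm} to the solvable lattice $H_R$ in the simply connected solvable group~$R$ to produce a continuous extension $\hat\varphi_R \colon R \to \glr$ with $\hat\varphi_R(R) \subseteq \Zar{\varphi(H_R)}$ up to the allowed compact-plus-center correction. Then apply the superrigidity of the semisimple part to extend $\varphi|_{H_S}$ to a continuous homomorphism $\hat\varphi_S \colon S \to \glr$. Finally, define the candidate extension on $G = R \rtimes S$ by $\hat\varphi(rs) := \hat\varphi_R(r)\,\hat\varphi_S(s)$.

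The main obstacle is verifying that $\hat\varphi$ is actually a homomorphism: this is the compatibility
$$ \hat\varphi_S(s)\,\hat\varphi_R(r)\,\hat\varphi_S(s)^{-1} = \hat\varphi_R(s r s^{-1}) \quad \text{for all } r \in R,\ s \in S .$$
Both sides, for fixed~$s$, are continuous homomorphisms $R \to \glr$, and they agree on the finite-index subgroup of~$H_R$ on which $\hat\varphi_R$ coincides with~$\varphi$, because $\varphi$ is multiplicative on $H = H_R \cdot H_S$. The essentially unique-extension property of homomorphisms out of simply connected solvable groups (which is what makes Theorem \ref{SuperSolvThm} work) then forces the two continuous homomorphisms to agree \emph{up to a compact error}. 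This unavoidable compact slop is exactly what produces the compact normal subgroup~$K$ in the statement, and is also the reason the theorem is stated for ``superrigid'' rather than ``strictly superrigid'' in the sense of Remark \ref{NotStrictRem}. Once compatibility is established, $\hat\varphi$ is a continuous homomorphism that virtually extends~$\varphi$, and its image is controlled by the product $\Zar{\varphi(H_R)}\cdot \Zar{\varphi(H_S)} \subseteq \Zar{\varphi(H)}$, completing the proof.
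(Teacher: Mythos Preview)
The paper does not actually prove this theorem. It is stated without proof, and the bibliography annotation for \cite{Witte-SolvRig} says ``Includes a proof of Theorem~\ref{SuperComboThm}.'' So there is no ``paper's own proof'' to compare against; the result is quoted as a black box.

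Your outline is the right shape --- Levi decomposition, Mostow/Auslander-type splitting of the lattice, extend on the solvable and semisimple pieces separately, then glue --- and this is indeed the architecture of the argument in \cite{Witte-SolvRig}. But as written there are genuine gaps that the cited paper spends real effort on. First, the lattice decomposition $H = H_R \cdot H_S$ with $H_R = H \cap R$ a lattice in~$R$ is not automatic; one needs Auslander's theorem and some work, and in general one only gets this after replacing $G$ by a finite cover and $H$ by a finite-index subgroup. Second, to invoke Theorem~\ref{SuperSolvThm} on the solvable piece you need $\Zar{H_R} = \Zar{R}$ (mod center), and deducing this from the global hypothesis $\Zar{H} = \Zar{G}$ is itself a step. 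Third, and most seriously, your compatibility check only establishes
\[
\hat\varphi_S(s)\,\hat\varphi_R(r)\,\hat\varphi_S(s)^{-1} = \hat\varphi_R(s r s^{-1})
\]
for $s \in H_S$ (and then for all $r \in R$ by the solvable extension argument). Getting from $s \in H_S$ to all $s \in S$ is not free: neither side is a homomorphism in~$s$, so you cannot simply invoke density of a lattice. One route is to show both sides are restrictions of polynomial (algebraic-group) maps and then use Zariski density of $H_S$ in~$S$; making that precise is where much of the technical content lives, and is also where the compact correction~$K$ genuinely enters rather than being hand-waved in at the end.
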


Although the problem for semisimple groups has not yet been settled in complete generality, a fundamental theorem of the Fields Medallist G.\,A.\,Margulis settled most cases. In particular:

\begin{thm}[Margulis Superrigidity Theorem] \label{MargSuperThm}
   If $n \ge 3$, then all lattices in $\SL(n,\real)$ are superrigid.
\end{thm}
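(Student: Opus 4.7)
The strategy is Margulis's original approach via equivariant boundary maps. Write $G = \SL(n,\real)$, fix a lattice $\Gamma \subset G$, and let $\varphi \colon \Gamma \to \GL(d,\real)$ be a homomorphism. Set $H = \Zar{\varphi(\Gamma)}$. After standard reductions---pass to a finite-index subgroup of $\Gamma$, quotient by a compact normal subgroup of $H$ as permitted in Remark~\ref{NotStrictRem}, and project onto the simple factors of $H$---we may assume $H$ is a connected, noncompact, simple algebraic group in which $\varphi(\Gamma)$ is Zariski dense. The goal is a continuous $\hat\varphi \colon G \to H$ agreeing with $\varphi$ on a finite-index subgroup of $\Gamma$, with the residual compact error absorbed into the reductions above.

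First I would construct a measurable $\Gamma$-equivariant boundary map. Let $P$ be the minimal parabolic subgroup of upper triangular matrices in $G$, so that $G/P$ is a compact $G$-space on which the $\Gamma$-action is ergodic; let $Q$ be a minimal parabolic of $H$, so that $H/Q$ is a projective variety. Because $P$ is amenable and the $P$-action on $\Gamma\backslash G$ is well behaved, Furstenberg's averaging machinery produces a measurable $\Gamma$-equivariant map $\phi \colon G/P \to \mathcal{P}(H/Q)$ into the space of probability measures on $H/Q$.

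Second, and this is the heart of the argument, I would upgrade $\phi$ to an algebraic map. Here the hypothesis $n \geq 3$ (equivalently, real rank at least two) enters essentially: it guarantees a pair of commuting noncompact one-parameter subgroups in $G$, and the Mautner phenomenon applied to them forces $\phi$ to be invariant under a large unipotent subgroup, ultimately collapsing its values onto Dirac masses and yielding a genuine measurable $\Gamma$-map $G/P \to H/Q$. Borel's density theorem supplies Zariski density of $\Gamma$ in $G$, and a rationality theorem of Margulis then promotes any such measurable $\Gamma$-equivariant map between $G$- and $H$-varieties to a rational one. From this rational equivariant map one reconstructs a morphism of algebraic groups $\hat\varphi \colon G \to H$ whose restriction agrees with $\varphi$ on a finite-index subgroup.

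The main obstacle is the measurable-to-algebraic upgrade in the second step: existence of the boundary map is a soft amenability argument, and extracting a homomorphism from a rational equivariant map is formal algebraic geometry, but the algebraicity step is the deep technical core of the theorem. The higher-rank hypothesis genuinely cannot be dropped---the continuous deformation spaces of surface group representations in $\SL(2,\real)$ (Teichmuller space) exhibit the failure of superrigidity in rank one, precisely because the commuting-unipotent input to the Mautner argument is unavailable there.
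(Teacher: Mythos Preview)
The paper does not actually prove this theorem. It is stated as a deep result of Margulis and attributed to the literature: the bibliography points to Margulis's monograph, Zimmer's book, and the author's own lecture notes for proofs. The paper's only use of the result is as a black box in the sketch of the Arithmeticity Theorem in the final section. So there is no ``paper's own proof'' against which to compare your attempt.

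That said, your outline is a faithful high-level summary of the Margulis--Zimmer approach that those references carry out: the reduction to a simple noncompact target via Zariski closure and compact quotients, the Furstenberg boundary map $G/P \to \mathcal{P}(H/Q)$ obtained from amenability of~$P$, the higher-rank step using commuting split tori and Mautner-type arguments to force the map to land in Dirac masses, and the passage from a measurable equivariant map between varieties to a rational one and hence to a homomorphism. You have also correctly located the genuine difficulty in the measurable-to-algebraic step and correctly explained why $n \ge 3$ is essential (and why $\SL(2,\real)$ fails, as the paper itself notes in the remark following the theorem). As a strategy sketch this is accurate; as a proof it is of course only an outline --- each of the steps you describe in a sentence occupies a chapter in Zimmer or Margulis --- but the paper itself makes no claim to supply those details either.
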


\begin{rem} \ 
\begin{enumerate}
\item The assumption that $n \ge 3$ is necessary: \emph{no} lattice in $\SL(2,\real)$ is superrigid. For example, if we let $H$ be any finite-index subgroup of $\SL(2,\integer)$, then $H$ is a lattice in $\SL(2,\real)$. However, it is possible to choose $H$ to be a free group, in which case $H$ has countless homomorphisms into $\SL(2,\real)$. Some of these homomorphisms have kernels that are infinite, but the kernel of any nontrivial homomorphism defined on $\SL(2,\real)$ must be finite.
\item Margulis proved superrigidity of lattices not only in $\SL(n,\real)$, but also in any simple Lie group~$G$ satisfying the technical condition that $\Rrank G \ge 2$.
\item The astute reader may have noticed that the modifier ``strictly'' is not being applied to ``superrigid'' in this section (c.f.\ Remark~\ref{NotStrictRem}). Although they are always superrigid, some lattices in $\SL(n,\real)$ are not strictly rigid.
\end{enumerate}
\end{rem}

Superrigidity implies that there is a very close connection between $H$ and~$G$. In fact, the connection is so close that it provides quite precise information on how to obtain~$H$ from~$G$.
Namely, superrigidity implies that letting $H$ be the integer points of~$G$ is often the only way to construct a lattice.

\begin{defn}
Suppose $H$ is a lattice in $G = \SL(n,\real)$. To avoid complications, let us assume $H$ is \emph{not} cocompact. We say $H$ is \emph{arithmetic} if there is an embedding of~$G$ in $\SL(d,\real)$, for some~$d$, such that $H$ is virtually equal to $G \cap \SL(d,\integer)$.
\end{defn}

\begin{thm}[Margulis Arithmeticity Theorem] \label{MargArithThm}
   If $n \ge 3$, then every lattice in $\SL(n,\real)$ is arithmetic.
\end{thm}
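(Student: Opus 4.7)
The plan is to deduce arithmeticity from the already-stated Margulis Superrigidity Theorem (Theorem \ref{MargSuperThm}). Let $H$ be a lattice in $G = \SL(n,\real)$ with $n \ge 3$. The overall strategy is: (i) show that, after conjugating, all matrix entries of elements of $H$ lie in a number field $F$; (ii) show that these entries are algebraic integers, up to inverting finitely many rational primes; and (iii) use Weil's restriction-of-scalars construction to exhibit $H$ as (virtually) the integer points of an embedded copy of $G$.

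For step (i), I would consider an arbitrary field embedding $\sigma \colon \overline{\rational} \to \complex$ and apply it entry-wise to $H$ (after first arguing, using a standard trace/character-theoretic reduction, that we may assume the entries of $H$ are algebraic). This gives a homomorphism $\varphi_\sigma \colon H \to \SL(n,\complex) \subset \GL(2n,\real)$. By superrigidity (allowing a compact factor as in Remark~\ref{NotStrictRem}), $\varphi_\sigma$ virtually extends to a continuous homomorphism $\hat\varphi_\sigma \colon G \to \SL(n,\complex)$, modulo compact. But $G = \SL(n,\real)$ is a simple real Lie group, so continuous homomorphisms $G \to \SL(n,\complex)$ fall into only finitely many equivalence classes (essentially the identity and its complex conjugate, up to compact quotients). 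Because the Galois group of $\overline{\rational}/\rational$ acts on the $\varphi_\sigma$ and only finitely many outcomes occur, the stabilizer of the trace-character of $H$ has finite index in the absolute Galois group, which forces the traces of $H$ to lie in a number field $F$ of finite degree. A standard characterization then upgrades this to the statement that $H$ (after conjugation) actually lies in $\SL(n,F)$.

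For step (ii), I would repeat the superrigidity argument, but now in the non-archimedean world: for each finite place $v$ of $F$, consider the inclusion $H \hookrightarrow \SL(n,F_v)$. The $p$-adic version of Margulis superrigidity (which is part of the same Margulis package) says this homomorphism virtually extends to a continuous homomorphism $G \to \SL(n,F_v)$. Since $G = \SL(n,\real)$ is connected, simple, and real, every such continuous homomorphism into a totally disconnected $p$-adic group is trivial, so $\varphi_v(H)$ must be relatively compact in $\SL(n,F_v)$. Compactness in the $v$-adic topology says precisely that the matrix entries of $H$ are $v$-integral. Since $H$ is finitely generated (a property of lattices via Mal'cev/Kazhdan--Margulis or by direct argument for $\SL(n,\integer)$-like lattices), only finitely many places $v$ can occur nontrivially in the denominators, so $H \subset \SL(n, \mathcal{O}_F[1/N])$ for some integer $N$.

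For step (iii), I would invoke Weil's restriction of scalars: view $\SL(n,\mathcal{O}_F)$ inside the product $\prod_{\sigma \colon F \hookrightarrow \real} \SL(n,\real) \times \prod_{\sigma \colon F \hookrightarrow \complex} \SL(n,\complex)$, obtaining an arithmetic lattice in a real algebraic group $G^*$ that contains our distinguished factor $G$. Projecting to this factor and using that the non-distinguished factors contribute compact kernels (because the corresponding $\hat\varphi_\sigma$ from step (i) were bounded) identifies $H$ with the projection of $\SL(n,\mathcal{O}_F)$, which, by definition, makes $H$ arithmetic. I expect the main obstacle to be precisely step (i): deducing from ``few continuous extensions'' that the trace field is a number field requires care, because one has to simultaneously control all Galois conjugates $\varphi_\sigma$ and rule out that they could vary continuously through a positive-dimensional family of inequivalent representations --- this is where the full strength of higher rank (superrigidity with compact modifications, not just local rigidity) is essential, and where the assumption $n \ge 3$ enters decisively.
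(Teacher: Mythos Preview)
Your proposal is correct and follows essentially the same route as the paper's proof: archimedean superrigidity to force the matrix entries into a number field, $p$-adic superrigidity (via the triviality of continuous maps from the connected group~$G$ into totally disconnected groups) to bound denominators, finite generation of the lattice to reduce to finitely many primes, and restriction of scalars to land in integer points. The only difference is organizational: the paper applies restriction of scalars early (in its Step~2) to pass from the number field~$F$ to~$\rational$ and then runs the $p$-adic argument prime by prime over~$\rational$, whereas you keep~$F$ throughout, run the non-archimedean argument place by place over~$F$, and invoke restriction of scalars only at the end---but the substance is the same.
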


For convenience, we stated the arithmeticity theorem only for $\SL(n,\real)$, but it is valid for lattices in any simple Lie group~$G$ with $\Rrank G \ge 2$. It is a truly astonishing result.

\section{Why Superrigidity Implies Arithmeticity}

It is not at all obvious that superrigidity has anything to do with arithmeticity, so let us give some idea of how the connection arises. We warn the reader in advance that our motivation here is pedagogical rather than logical --- the main ideas in the proof of the Margulis
Arithmeticity Theorem \pref{MargArithThm} will be presented, but there will be no attempt to be rigorous.

We are given a lattice~$\Gamma$ in $G = \SL(n,\real)$, with $n \ge 3$, and we wish to show that $\Gamma$ is arithmetic.  Roughly speaking, we wish to show $\Gamma \subset \SL(n,\integer)$.

Here is a loose description of the 4 steps of the proof:
	\begin{enumerate}
	\item The Margulis Superrigidity Theorem \pref{MargSuperThm} implies that every matrix entry of every element of~$\Gamma$ is an algebraic number.
	\item Algebraic considerations allow us to assume that these algebraic numbers are rational; that is, $\Gamma \subset \SL(n,\rational)$.
	\item For every prime~$p$, a ``$p$-adic'' version of the Margulis Superrigidity Theorem provides a natural number~$N_p$, such that no element of~$\Gamma$ has a matrix entry whose denominator is divisible by~$p^{N_p}$. 
	\item This implies that some finite-index subgroup~$\Gamma'$ of~$\Gamma$ is contained in $\SL(n,\integer)$.
	\end{enumerate}

\setcounter{step}{0}

\begin{step} \label{ArithThmPf-algic}
 Every matrix entry of every element of\/~$\Gamma$ is an
algebraic number.
 \end{step}
 Suppose some $\gamma_{i,j}$ is transcendental.
 Then, for any transcendental number~$\alpha$, there is a
field automorphism~$\phi$ of~$\complex$ with
$\phi(\gamma_{i,j}) = \alpha$. Applying~$\phi$ to all the
entries of a matrix induces an automorphism~$\widetilde\phi$
of $\SL(n,\complex)$. Let
	$$ \text{$\varphi$ be the restriction of~$\widetilde\phi$ to~$\Gamma$,} $$
so $\varphi$ is a homomorphism from~$\Gamma$ to $\SL(n,\complex)$.
The Margulis Superrigidity Theorem \pref{MargSuperThm} implies there is a
continuous homomorphism $\hat\varphi \colon G \to
\SL(n,\complex)$, such that $\hat\varphi = \varphi$ on
a finite-index subgroup of~$\Gamma$.
(For simplicity, we have ignored the distinction between ``superrigid''
and ``strictly superrigid.'') Ignoring a finite group,
let us assume $\hat\varphi = \varphi$ on
all of~$\Gamma$.

Since there are uncountably many transcendental
numbers~$\alpha$, there are uncountably many different
choices of~$\phi$, so there must be uncountably many
different $n$-dimensional representations~$\hat\varphi$
of~$G$. However, it is well known from the the theory of
``roots and weights'' that $G$ (or any
connected, simple Lie group) has
only finitely many non-isomorphic representations
of any given dimension, so this is a contradiction.

{\smaller[2]
\begin{techrem}
Actually, this is not quite a
contradiction, because it is possible that two different
choices of~$\varphi$ yield the same representation of~$\Gamma$,
up to isomorphism; that is, after a change of basis. The
trace of a matrix is independent of the basis, so the
preceding argument really shows that the trace
of~$\varphi(\gamma)$ must be algebraic, for every
$\gamma \in \Gamma$. Then one can use some algebraic methods
to construct some other matrix representation~$\varphi'$
of~$\Gamma$, such that the matrix entries of~$\varphi'(\gamma)$
are algebraic, for every $\gamma \in \Gamma$.
\end{techrem}
}

\begin{step}
 We have $\Gamma \subset \SL(n,\rational)$.
 \end{step}
 Let $F$ be the subfield of~$\complex$ generated by the 
matrix entries of the elements of~$\Gamma$, so $\Gamma 
\subset \SL(n,F)$. From Step~\ref{ArithThmPf-algic}, we know
that this is an algebraic extension of~$\rational$.
Furthermore, because it is known that $\Gamma$ has a finite 
generating set, we see that this field extension is finitely
generated. Thus, $F$ is finite-degree field extension
of~$\rational$ (in other words, $F$ is an ``algebraic number
field''). This means that $F$ is almost the same
as~$\rational$, so it is only a slight exaggeration to say
that we have proved $\Gamma  \subset \SL(n,\rational)$.

Indeed, there is an algebraic technique, called ``Restriction of Scalars''
that provides a way to change $F$
into~$\rational$: there is a representation $\rho \colon G
\to \SL(\ell,\complex)$, such that $\rho \bigl( G \cap
\SL(n,F) \bigr) \subset \SL(\ell,\rational)$. Thus, after
changing to this new representation of~$G$, we have the
desired conclusion (without any exaggeration).

\begin{step} \label{MargArithPf-BddPowerP}
For every prime~$p$, there is a natural number~$N_p$, such 
that no element of~$\Gamma$ has a matrix entry whose 
denominator is divisible by~$p^{N_p}$. 
\end{step}
The fields $\real$ and~$\complex$ are complete (that is,
every Cauchy sequence converges), and they obviously 
contain~$\rational$. For any prime~$p$, the $p$-adic 
numbers~$\rational_p$ are another field that has these
same properties.

The Margulis Superrigidity Theorem~\pref{MargSuperThm}
deals with homomorphisms into $\SL(d,\F)$, where $\F = \real$,
but Margulis also proved a version of the theorem
that applies when $\F$ is a $p$-adic field. 
Now $G$ is connected,
but $p$-adic fields are totally disconnected, so every continuous 
homomorphism from~$G$ to $\SL(n,\rational_p)$ is trivial.
Thus, superrigidity tells us that $\varphi$ is trivial, up to a 
bounded error (c.f.\ Remark~\ref{NotStrictRem}). In other words, the closure of 
$\varphi(\Gamma)$ is compact in $\SL(n,\rational_p)$.

This conclusion can be rephrased in more elementary terms,
without any mention of the field $\rational_p$ of $p$-adic
numbers. Namely, it says that there is a bound on the 
highest power of~$p$ that divides any matrix entry of
any element of~$\Gamma$. This is what we wanted.

\begin{step}
Some finite-index subgroup~$\Gamma'$ of~$\Gamma$ is 
contained in $\SL(n,\integer)$.
\end{step}
 Let $D \subset \natural$ be the set consisting of the
denominators of the matrix entries of the elements of
$\varphi(\Gamma)$. 

We claim there exists $N \in \natural$, such that every
element of~$D$ is less than~$N$.
Since $\Gamma$ is known to be finitely generated,
some finite set of primes $\{p_1,\ldots,p_r\}$ contains all
the prime factors of every element of~$D$. (If $p$~is in the
denominator of some matrix entry of $\gamma_1
\gamma_2$, then it must
appear in a denominator somewhere in either $\gamma_1$
or~$\gamma_2$.) Thus, every element of~$D$ is of the
form $p_1^{m_1} \cdots p_r^{m_r}$, for some $m_1,\ldots,m_r
\in \natural$. From Step~\ref{MargArithPf-BddPowerP}, 
we know $m_i < N_{p_i}$,
for every~$i$. Thus, every element of~$D$ is less than 
$p_1^{N_{p_1}} \cdots p_r^{N_{p_r}}$. This establishes
the claim.

 From the preceding paragraph, we see that 
 $\Gamma \subset \frac{1}{N!} \Mat_{n
\times n}(\integer)$. Note that if $N = 1$, then $\Gamma
\subset \SL(n,\integer)$. In general, $N$ is a finite 
distance from~$1$, so it should not be hard to believe
(and it can indeed be shown) that some 
finite-index subgroup of~$\Gamma$ must be contained
in $\SL(n,\integer)$.
\qed

\renewcommand{\refname}{Further Reading}


\begin{thebibliography}{[99]}

\newcommand{\bibAnnotation}[1]{\par \begingroup \parindent=2em \smallskip \begin{narrower} \noindent #1 \par  \end{narrower} \endgroup \medbreak}

 \bibitem{GraverEtAl}
 J.\,Graver, B.\,Servatius, and H.\,Servatius:
  \emph{Combinatorial Rigidity.}
  American Mathematical Society, Providence, 1993.
 \MR{1251062 (95b:52034)}
  \bibAnnotation{A study of the rigidity of linkages in $n$-space, including an annotated bibliography.}
 
\bibitem{Humphreys-Algic}
 J.\,E.\,Humphreys:
 \emph{Linear Algebraic Groups.}
 Springer, New York, 1975.
 \MR{0396773 (53 \#633)}
 \bibAnnotation{A textbook on algebraic groups (including Zariski closures).}

\bibitem{MargulisBook}
 G.\,A.\,Margulis:
 \emph{Discrete Subgroups of Semisimple Lie Groups.}
 Springer, New York, 1991.
\MR{1090825 (92h:22021)}
 \bibAnnotation{An encyclopedic and impressive monograph that includes proofs of the Margulis Superrigidity Theorem \pref{MargSuperThm} and the Margulis Arithmeticity Theorem \pref{MargArithThm}.}

\bibitem{MorrisArithGrps}
 D.\,W.\,Morris:
 \emph{Introduction to Arithmetic Groups}
 (in preparation).
\\ {http://arxiv.org/abs/math/0106063}
 \bibAnnotation{Includes an exposition of a proof of the Margulis Superrigidity Theorem \pref{MargSuperThm}.
 \\ \emph{[Note to referee \& editor: This proof has not yet been added to the book, but it will be done by January or February.]}}   

\bibitem{Raghunathan}
 M.\,S.\, Raghunathan:
 \emph{Discrete Subgroups of Lie Groups.}
 Springer, New York, 1972.
\MR{0507234 (58 \#22394a)}
 \bibAnnotation{The place to learn basic properties of lattices.}

\bibitem{Starkov}
 A.\,N.\,Starkov:
 Rigidity problem for lattices in solvable Lie groups,
 \emph{Proc. Indian Acad. Sci. Math. Sci.} 104 (1994) 495--514.
 \MR{1314393 (96d:22017)}
 \bibAnnotation{A thorough study of global rigidity of lattices in solvable Lie groups.}

\bibitem{Varadarajan}
 V.\,S.\,Varadarajan:
 \emph{Lie Groups, Lie Algebras, and Their Representations.}
 Springer, New York, 1984.
 \MR{0746308 (85e:22001)}
 \bibAnnotation{A textbook on Lie groups.}

\bibitem{Witte-SolvRig}
 D.\,Witte:
  Superrigidity of lattices in solvable Lie groups,
 \emph{Inventiones Math.} 122 (1995) 147--193.
 \MR{1354957 (96k:22024)}
 \bibAnnotation{Includes a proof of Theorem~\ref{SuperComboThm}.}

\bibitem{Witte-SuperSynd}
 D.\,Witte:
Superrigid subgroups and syndetic hulls in solvable Lie groups,
 in: M.\,Burger and A.\,Iozzi, eds., \emph{Rigidity in Dynamics and Geometry
(Contributions from the Programme Ergodic Theory, Geometric Rigidity and
Number Theory, Cambridge, United Kingdon, 5 January to 7 July 2000),}
Springer, Berlin, 2002, pp. 441--457.
 \MR{1919416 (2003g:22005)}
 \bibAnnotation{An exposition of the proof of the superrigidity theorem for solvable groups \pref{SuperSolvThm}.}

\bibitem{ZimmerBook}
 R.\,J.\,Zimmer:
 \emph{Ergodic Theory and Semisimple Groups.}
 Birkh\"auser, Boston, 1984.
\MR{0776417 (86j:22014)}
 \bibAnnotation{Includes proofs of the Margulis Superrigidity Theorem \pref{MargSuperThm} and the Margulis Arithmeticity Theorem \pref{MargArithThm}. Less intimidating than \cite{MargulisBook}, but still demanding.}

\end{thebibliography}
\end{document}